\newtheorem{theorem}{Theorem}[section]
\newtheorem{lemma}[theorem]{Lemma}
\newtheorem{proposition}[theorem]{Proposition}
\newtheorem{corollary}[theorem]{Corollary}
\theoremstyle{definition}
\newtheorem{definition}[theorem]{Definition}
\newtheorem{remark}[theorem]{Remark}
\numberwithin{equation}{section}
\newcommand{\N}{\mathbb{N}} %naturali
\newcommand{\R}{\mathbb{R}} %reali
\newcommand{\C}{\mathbb{C}} %complessi
\newcommand{\A}{\mathcal{A}} %Q-punti
\newcommand{\G}{\mathcal{G}} %distanza su A_Q
\newcommand{\sep}{{\rm sep}}
\newcommand\res{\mathop{\hbox{\vrule height 7pt width .3pt depth 0pt\vrule height .3pt width 5pt depth 0pt}}\nolimits}
\newcommand{\M}{\mathbf{M}} %massa
\newcommand{\reg}{\mathrm{Reg}} %insieme regolare
\newcommand{\sing}{\mathrm{Sing}} %insieme singolare
\newcommand{\eps}{\varepsilon} %epsilon
\newcommand{\spt}{\mathrm{spt}} %support (usually of a measure)
\newcommand{\supp}{\mathrm{supp}} %support (usually of a function)
\newcommand{\dist}{\mathrm{dist}} %distance
\newcommand{\diam}{\mathrm{diam}} %diameter
\renewcommand{\epsilon}{\varepsilon}
\def\XXint#1#2#3{{\setbox0=\hbox{$#1{#2#3}{\int}$ }
		\vcenter{\hbox{$#2#3$ }}\kern-.6\wd0}}
\DeclareMathOperator{\Div}{div} %divergence (conflict?)
\def\I#1{{\mathcal{A}}_{#1}}
\newcommand{\Iqs}{{\mathcal{A}}_Q(\R^{n})}
\newcommand{\Iq}{{\mathcal{A}}_Q}
\def\a#1{\left\llbracket{#1}\right\rrbracket}
\newcommand{\norm}[1]{\left\lVert#1\right\rVert} %norm
\newcommand{\Iqsn}{\overset{\circ}{\mathcal{A}_Q}(\R^{n})}
\newcommand{\Iqn}{\overset{\circ}{\mathcal{A}_Q}}
\newcommand\cH{{\mathcal{H}}}
\newcommand\cG{{\mathcal{G}}}
\title[$2$-d stationary $Q$-valued maps]{Interior regularity for two-dimensional stationary $Q$-valued maps}
\author[J. Hirsch]{Jonas Hirsch}
\address{Mathematisches Institut, Universit\"at Leipzig, Augustusplatz 10, D-04109 Leipzig, Germany}
\email{hirsch@math.uni-leipzig.de}
\author[L. Spolaor]{Luca Spolaor}
\address{Department of Mathematics, UC San Diego, AP\&M, La Jolla, California, 92093, USA}
\email{lspolaor@ucsd.edu}
\begin{document}

\maketitle

\begin{abstract}We prove that $2$-dimensional $Q$-valued maps that are stationary with respect to outer and inner variations of the Dirichlet energy are H\"older continuous and that the dimension of their singular set is at most one. In the course of the proof we establish a strong concentration-compactness theorem for equicontinuous maps that are stationary with respect to outer variations only, and which holds in every dimensions. 
   
\end{abstract}

\tableofcontents

\section{Introduction}

Recall that a $Q$-valued map $f\in W^{1,2}(\Omega,\Iq(\R^n))$, $\Omega\subset \R^m$ open, is \emph{stationary (with respect to outer and inner variations of the Dirichlet energy)} if it is a critical point of the Dirichlet energy, that is it satisfies an \emph{outer variation formula}
\begin{equation}\label{eq:outer}
    \mathcal O(f, \psi):=\int \sum_i\langle Df_i(x)\,:\, D_x \psi(x,f_i(x))\rangle \,dx+\int \sum_i\langle Df_i(x)\,:\, D_u \psi(x,f_i(x))\cdot Df_i(x)\rangle \,dx=0\,,
\end{equation}
where $\psi(x,u) \in C^\infty(\Omega\times \R^n;\R^n)$ with $\Omega$ compact and 
\[
|D_u\psi|\leq C<\infty\qquad \text{and}\qquad |\psi|+|D_x\psi|\leq C\, (1+|u|)\,,
\]
and an \emph{inner variation formula}
\begin{equation}\label{eq:inner}
    \mathcal I(f, \phi):=2 \int \sum_{i=1}^Q \langle Df_i \,:\, Df_i\cdot D\phi \rangle - \int |Df|^2 \, \Div \phi = 0\,,\qquad \forall \phi \in C^\infty_c(\Omega, \R^m)\,.
\end{equation}
We will say that a $Q$-valued map $f\in W^{1,2}(\Omega,\Iq(\R^n))$, $\Omega\subset \R^m$ open, is \emph{weakly stationary} if $\mathcal O(f, \cdot)=0$, that is if it is stationary with respect to outer variations only.

For the derivation of such formulas as the Euler-Lagrange equations of the appropriate Dirichlet energy and the related Sobolev theory of $Q$-valued maps, we refer the reader to \cite[Sections 2 and 3]{DS}, whose notations we will follow.

Multivalued maps that minimize an appropriate Dirichlet energy were introduced by Almgren in \cite{Alm} in his celebrated proof of the optimal bound on the dimension of the singular set of area minimizing currents in high codimension, as the appropriate linearized problem. More recently, De Lellis and Spadaro revisited this theory with metric techniques in \cite{DS1,DLS_Currents,DLS_Center,DLS_Blowup,DS} (see also \cite{Hi} for minimizers taking values in a smooth compact Riemannian manifold, and \cite{BDPW} for minimizers of the $p$-Dirichlet energy). 

In order to extend Almgren's proof and techniques to the study of the singular set of stationary varifolds and currents, it is therefore a natural first step to investigate the regularity of stationary multivalued maps: this is the goal of this note in the $2$-dimensional case. The only known result in this setting is due to Lin who proved in \cite{Lin} continuity of such maps. In this paper we give a shorter and intrinsic proof of Lin's result (that is without the use of Almgren's embedding) and we improve it to H\"older regularity. We then prove a concentration compactness result for maps that are stationary with respect to outer variations only, and equicontinuous, and which holds in every dimension. Finally we combine these results to prove the optimal bound on the dimension of the singular set of $2$-dimensional stationary $Q$-valued maps.

\subsection{Main results}
Our first result is a continuity result for $2$-dimensional stationary maps. As mentioned above, this has already been proven in \cite{Lin}, following ideas from \cite{Grueter1}. Our proof also follows the blueprint of Gr\"uter, but with respect to Lin's proof it has the advantage of avoiding Almgren's projection, thus being completely intrinsic, and moreover we obtain an explicit modulus of continuity that will be crucial in the proofs of the results below. We refer the reader to Section \ref{ss:notation} for the relevant notations.

\begin{theorem}[Continuity]\label{thm:cont}
 Let $f\in W^{1,2}(B_1,\Iq(\R^n))$ be stationary in $B_{1}\subset \R^2$. Then $f$ is continuous in $B_{1/2}$, more precisely there exists a constant $C=C(Q)>0$ such that
 \begin{equation}\label{eq:modulusofcont}
     \G(f(x),f(y))^2\leq C \left(\int_{B_{3|x-y|}(x)}|Df|^2 \right) + C\,\left(\int_{B_1} |Df|^2\right) |x-y|^2 \,,
 \end{equation}
 for every $x,y\in B_{1/2}$ with $|x-y|<\frac16$.
\end{theorem}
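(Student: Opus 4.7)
The plan is to establish a Morrey-type decay of the Dirichlet energy $D(r) := \int_{B_r(x_0)} |Df|^2$ on balls centered at $x_0 \in B_{1/2}$, and then extract the modulus \eqref{eq:modulusofcont} via a Campanato-type argument intrinsic to $\Iq(\R^n)$-valued maps.

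First, testing the inner variation \eqref{eq:inner} with the radial field $\phi(x) = \chi(|x - x_0|)(x - x_0)$ for a cutoff $\chi$ approximating $\mathbf{1}_{[0,r]}$ yields, after using that in dimension $m = 2$ the terms proportional to $\chi$ cancel exactly, the \emph{equipartition of energy} on circles
\[
\int_{\partial B_r(x_0)} |\partial_\nu f|^2 \, d\sigma \,=\, \int_{\partial B_r(x_0)} |\partial_\tau f|^2 \, d\sigma \,=\, \tfrac{1}{2}\, D'(r) \qquad \text{for a.e.\ } r,
\]
where $|\partial_\nu f|^2 := \sum_i |\partial_\nu f_i|^2$, $|\partial_\tau f|^2$ is the tangential counterpart, and $|Df|^2 = |\partial_\nu f|^2 + |\partial_\tau f|^2$. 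Next, testing the outer variation \eqref{eq:outer} with $\psi(x,u) = \chi(|x-x_0|)(u - c)$, where $c \in \R^n$ is the average of the barycenter $\eta\circ f := \tfrac{1}{Q}\sum_i f_i$ on $\partial B_r(x_0)$, and again letting $\chi \to \mathbf{1}_{[0,r]}$, gives
\[
D(r) \,=\, \tfrac{1}{2} \int_{\partial B_r(x_0)} \partial_\nu \, \G(f, Q\a{c})^2 \, d\sigma.
\]
Cauchy--Schwarz combined with a $Q$-valued Poincar\'e inequality on the circle (handled by splitting $\G(f, Q\a{c})^2 \le 2\,\G(f, Q\a{\eta\circ f})^2 + 2Q|\eta\circ f - c|^2$ and controlling the two pieces separately) bounds the right-hand side by $C\bigl(r\int_{\partial B_r}|\partial_\tau f|^2\bigr)^{1/2}\bigl(\int_{\partial B_r}|\partial_\nu f|^2\bigr)^{1/2}$. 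Inserting the equipartition yields $D(r) \le C\, r\, D'(r)$, and integration produces the Morrey decay $D(r) \le D(R)(r/R)^{2\alpha}$ for some $\alpha = \alpha(Q) > 0$ and all $0 < r \le R \le 1/2$.

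Finally, to promote this decay to \eqref{eq:modulusofcont}, I would run a Campanato-type comparison for maps into $\Iq(\R^n)$: for $x, y \in B_{1/2}$ with $\rho := |x - y| < \tfrac14$, bound $\G(f(x), f(y))^2$ by summing the oscillations of $f$ across dyadic circles centered at $x$ inside $B_{3\rho}(x)$. Each dyadic oscillation is controlled, via Poincar\'e on the circle together with equipartition, by the Dirichlet energy at the corresponding scale; geometric summation inside $B_{3\rho}$ yields the local term $C\int_{B_{3\rho}}|Df|^2$, while iterating the Morrey decay from unit scale down to scale $\rho$ contributes the error $C\,\bigl(\int_{B_1}|Df|^2\bigr)\,\rho^2$ after scaling back. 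The main obstacle is that every step must be executed intrinsically, without Almgren's Euclidean embedding: this forces a careful treatment of the combination $Q\a{c}$ inside the outer-variation identity, of the Poincar\'e inequality for $f$ relative to $Q\a{\eta\circ f}$ on each circle, and of the metric-valued Campanato lemma that converts the Morrey decay of $D(r)$ into pointwise oscillation control of $f$.
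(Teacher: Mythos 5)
Your plan is a differential-inequality / Morrey decay argument that is structurally different from the paper's proof, but it has a genuine gap at the Poincar\'e step that cannot be bypassed within your framework.

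The first two ingredients are correct. Testing the inner variation \eqref{eq:inner} with $\phi=\chi(|x-x_0|)(x-x_0)$ does give equipartition $\int_{\partial B_r}|\partial_\nu f|^2=\int_{\partial B_r}|\partial_\tau f|^2$ in dimension two, and testing the outer variation \eqref{eq:outer} with $\psi(x,u)=\chi(|x-x_0|)(u-c)$ does give
$D(r)=\int_{\partial B_r}\sum_i\partial_\nu f_i\cdot(f_i-c)\,d\sigma$
for any fixed $c\in\R^n$. The failure is the inequality you then invoke,
\[
\int_{\partial B_r}\G(f,Q\a{c})^2\,d\sigma\ \le\ C\,r^2\int_{\partial B_r}|\partial_\tau f|^2\,d\sigma\,,
\]
with the center forced to be the \emph{collapsed} $Q$-point $Q\a{c}$. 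This is false. Take $f\equiv\a{e_1}+\a{-e_1}$ on $\partial B_r$: the left-hand side is $\ge 2r$ for any $c$, while the right-hand side vanishes. Your proposed split $\G(f,Q\a{c})^2\le 2\,\G(f,Q\a{\eta\circ f})^2+2Q|\eta\circ f-c|^2$ does not help, because the term $\G(f,Q\a{\eta\circ f})^2=\sum_i|f_i-\eta\circ f|^2$ measures precisely the spread of the sheets, which is not controlled by $|\partial_\tau f|$. This is the central obstruction of the $Q$-valued problem: as soon as the map spends time near a genuinely split $Q$-point $S=\sum Q_i\a{s_i}$, the only admissible Poincar\'e center is the split point itself, but the outer variation with a single constant vector $c$ cannot see this. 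To test against a split center one must use a retraction-type $\psi(x,u)=\chi(\cdot)(u-P_S(u))$ as in the paper, and one must then track the separation scales $\sep(S)$ as $r$ changes; that bookkeeping is exactly what Lemma \ref{lem:key} and the target-metric-ball density $\Theta_f$ of Proposition \ref{prop:energymon} are for.

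The two approaches also differ in how they use the inner variation. You only extract equipartition, $|\partial_\nu f|^2$ equals $|\partial_\tau f|^2$ in integral over circles; the paper upgrades $f$ to an average \emph{conformal} map $F$ via the Hopf differential (Proposition \ref{prop:statavgconf}), which is the full pointwise statement $|e_1|=|e_2|$ \emph{and} $e_1\perp e_2$. Conformality, not merely equipartition, is what makes the Gr\"uter-style monotonicity of $s\mapsto\Theta_F(\Omega,S,s)$ go through, and it is also what produces the extra $|x-y|^2\int_{B_1}|Df|^2$ term in \eqref{eq:modulusofcont} via \eqref{eq:conformalenergy}. If your Morrey decay had worked, it would have yielded H\"older continuity outright; the paper instead proves bare continuity here and obtains H\"older later by combining with higher integrability (Theorem \ref{thm:highint}). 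So even as a strategy, what you propose is aimed at a stronger conclusion than the theorem actually claims, and the known route to that stronger conclusion goes through the continuity theorem first.

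In short: the equipartition and outer-variation identities are fine, the Poincar\'e step is where the argument breaks, and fixing it pulls you inexorably toward the retraction test fields, the combinatoric splitting lemma, and the conformalization that constitute the paper's proof.
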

We remark that for the higher dimensional case the best known result is VMO regularity proved by Bouafia, De Pauw and Wang in \cite{BDPW}. Our result only holds in dimension $2$ since we make use of conformal methods in the proof of Theorem \ref{thm:cont}. If such a theorem was available in higher dimensions, then all the results below would extend with the same proofs.

Next we prove higher integrability for $2$-dimensional stationary maps. A similar result for Dir-minimizing multivalued maps in any dimension has been proven with different techniques in \cite[Theorem 5.1]{DS1}.  

\begin{theorem}[Higher integrability]\label{thm:highint}
There exist $p>2$ and a constant $C=C(Q)> 0$ such that if  $f\in W^{1,2}(B_1,\Iq(\R^n))$ is stationary in $B_{1}\subset \R^2$ then 
\begin{equation}
    \left( \int_{B_{\frac12}} |Df|^p \right)^\frac1p \le C \int_{B_1} |Df|^2\,. 
\end{equation}
\end{theorem}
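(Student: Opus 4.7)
I aim to prove the reverse Hölder inequality
$$\dashint_{B_r}|Df|^2 \;\leq\; C\left(\dashint_{B_{2r}}|Df|\right)^2, \qquad B_{2r}\subset B_{1/2},$$
and then apply Gehring's lemma to upgrade integrability from $2$ to some $p>2$. The two tools are (i)~a Caccioppoli-type inequality of the form $\int_{B_r}|Df|^2 \leq C(\rho-r)^{-2}\int_{B_\rho}\G(f,\bar f)^2$ for a suitable constant $Q$-point center $\bar f\in\Iqs$, and (ii)~the $Q$-valued Poincaré--Sobolev inequality of \cite[Proposition~2.12]{DS1}, which in dimension two (via $W^{1,1}\hookrightarrow L^2$) yields $\bar f$ with $\|\G(f,\bar f)\|_{L^2(B_\rho)}\leq C\|Df\|_{L^1(B_\rho)}$.

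\textbf{Caccioppoli.} I test the outer variation formula \eqref{eq:outer} with $\psi(x,u)=\eta^2(x)(u-\xi)$, where $\eta\in C_c^\infty(B_\rho)$ is a standard cutoff satisfying $\eta\equiv 1$ on $B_r$ and $|\nabla\eta|\leq 2(\rho-r)^{-1}$, and $\xi\in\R^n$ is a constant. One computes $D_x\psi=2\eta\,\nabla\eta\otimes(u-\xi)$, $D_u\psi=\eta^2\,\mathrm{Id}$; substituting in \eqref{eq:outer} and using the pointwise bound $|\nabla\sum_i|f_i-\xi|^2|\leq 2\sqrt{\sum_i|f_i-\xi|^2}\,|Df|$ followed by Cauchy--Schwarz gives the single-point Caccioppoli
$$\int_{B_r}|Df|^2 \;\leq\; \frac{C}{(\rho-r)^2}\int_{B_\rho}\G(f,Q\a{\xi})^2 \qquad \forall\,\xi\in\R^n.$$
To upgrade to a generic $Q$-valued center $\bar f=\sum_\alpha Q_\alpha\a{\xi_\alpha}$ with distinct $\xi_\alpha$, I use Theorem~\ref{thm:cont} to guarantee that on sufficiently small balls $\sup_{B_\rho}\G(f,\bar f)<\tfrac14\sep(\bar f)$. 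Under this closeness condition $f$ admits a continuous decomposition $f=\sum_\alpha f^{(\alpha)}$ into $Q_\alpha$-valued components taking values in $B_{\sep(\bar f)/4}(\xi_\alpha)$; each $f^{(\alpha)}$ inherits outer stationarity by restricting $\psi$ to $\Omega\times B_{\sep(\bar f)/4}(\xi_\alpha)$ in \eqref{eq:outer}, and summing the single-point Caccioppoli applied to each piece with center $\xi_\alpha$ yields the $Q$-valued Caccioppoli
$$\int_{B_r}|Df|^2 \;\leq\; \frac{C}{(\rho-r)^2}\int_{B_\rho}\G(f,\bar f)^2.$$

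\textbf{Conclusion and main obstacle.} Combining the $Q$-valued Caccioppoli with the Poincaré center from (ii) and setting $\rho=2r$ produces the reverse Hölder inequality above; Gehring's lemma then yields $p>2$ and a constant $C$ with the stated quantitative bound on $\|Df\|_{L^p}$. The principal technical point is the ``large oscillation'' regime $\sup_{B_\rho}\G(f,\bar f)\gtrsim\sep(\bar f)$, where the decomposition of $f$ along the clusters of $\bar f$ fails: in this case one replaces $\bar f$ by a coarser $Q$-valued center obtained by merging clusters whose mutual separation is dominated by the oscillation of $f$, iterating at most $Q$ times until the decomposition becomes valid. Theorem~\ref{thm:cont} is essential here, as it provides the quantitative trade-off between the oscillation of $f$ and its Dirichlet energy on slightly enlarged concentric balls, which is what lets one coordinate the dichotomy uniformly across scales so that the reverse Hölder holds on every $B_{2r}\subset B_{1/2}$.
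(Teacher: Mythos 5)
There is a genuine gap, and it sits precisely at the step you flag as the ``principal technical point.'' In the non-splitting regime you propose to replace the Poincar\'e center $\bar f$ by a merged center $\bar f'$, and you invoke Theorem~\ref{thm:cont} to control the oscillation of $f$ relative to $\sep(\bar f)$. But Theorem~\ref{thm:cont} bounds the oscillation by $\bigl(\int |Df|^2\bigr)^{1/2}$, an $L^2$ quantity, whereas the Poincar\'e inequality \cite[Prop.~2.12]{DS1} bounds $\|\G(f,\bar f)\|_{L^2}$ by $\|Df\|_{L^1}$. After merging, $\G(\bar f,\bar f')$ is of the order of the merged separations, hence of the order of the oscillation, so $\|\G(f,\bar f')\|_{L^2(B_{2r})}\gtrsim r\,\bigl(\int_{B_{4r}}|Df|^2\bigr)^{1/2}$ in the worst case. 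Feeding this into the Caccioppoli produces $\int_{B_r}|Df|^2 \lesssim \int_{B_{4r}}|Df|^2$, which is a triviality, not a gain. Concretely, when $|Df|$ is concentrated on a small set, $\|Df\|_{L^1(B_{2r})}$ can be much smaller than $r\cdot\osc_{B_{2r}}(f)$, and then the classical reverse H\"older $\fint_{B_r}|Df|^2\le C\bigl(\fint_{B_{2r}}|Df|\bigr)^2$ that you assert simply does not follow from your argument.

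What the paper actually proves (Lemma~\ref{lem:highint} together with the proof of Theorem~\ref{thm:highint}) is the weaker, mixed-exponent inequality
\[
\fint_{B_r} |Df|^2 \le C\Bigl(\fint_{B_{2r}}|Df|^2\Bigr)^{\frac12}\,\fint_{B_{2r}}|Df|\,,
\]
which is exactly what one is forced into when the oscillation must be paid for in $L^2$ via continuity. This inequality is \emph{not} a classical reverse H\"older---by H\"older it is saturated whenever $|Df|$ is roughly constant---so the paper explicitly replaces equation~(6.48) in Giaquinta--Martinazzi's Gehring argument with this form and reworks the Calder\'on--Zygmund step accordingly. Your proposal treats Gehring's lemma as a black box applied to an inequality you do not actually have. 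The dichotomy you describe (small-oscillation splitting vs.\ large-oscillation merging) is in spirit the same as the paper's induction on $Q$ via Lemma~\ref{lem:camilloemanuele} and the $D\lessgtr M\omega_f(1)$ case split inside Lemma~\ref{lem:highint}; but to close the argument you must (i) accept the mixed-exponent reverse H\"older as the output, and (ii) supply the modified Gehring lemma, which is where the real technical content of the theorem lies.
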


\begin{remark}\label{rem.higherIntegrability}
In \cite{Spadaro} it had been proven that in 2 dimensional Dirichlet minimizers the optimal higher integrability is $p<\frac{2Q}{Q-1}$. Our method does not provide the sharp integrability.  
\end{remark}

As an immmediate corollary of Theorems \ref{thm:cont} and \ref{thm:highint}, we deduce H\"older regularity of $2$-dimensional multivalued stationary maps. It is interesting to notice that our proof makes no use of Almgren's embedding, at difference from the proof in the minimizing case. 

\begin{corollary}[H\"older continuity]\label{cor:hoelder}
There exist positive constants $C$ and $\alpha\in (0,1)$, depending only on $Q$, such that if $f\in W^{1,2}(B_1, \Iq(\R^n))$ is a stationary map in $B_1\subset \R^2$, then 
\begin{equation}\label{eq:holder}
    \G(f(x),f(y)) \leq C \left(\int_{B_1} |Df|^2\,dx\right)^\frac12\,|x-y|^\alpha \,,\qquad \forall x,y\in B_{1/2}\,.
\end{equation}
\end{corollary}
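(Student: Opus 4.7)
The plan is to deduce \eqref{eq:holder} by combining the explicit modulus of continuity \eqref{eq:modulusofcont} of Theorem \ref{thm:cont} with the higher integrability of Theorem \ref{thm:highint}, in essentially the same way in which a Morrey-type H\"older estimate follows from a Meyers-type reverse H\"older inequality for solutions of elliptic equations. Set $E := \int_{B_1}|Df|^2$ and let $p>2$ be the exponent furnished by Theorem \ref{thm:highint}, so that $\|Df\|_{L^p(B_{1/2})}^2 \leq C\, E$ (this is the dimensionally consistent form of that estimate; equivalently, one may rescale $f \mapsto f/\sqrt{E}$, which preserves both stationarity conditions \eqref{eq:outer}--\eqref{eq:inner} and normalises $E = 1$).

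Fix $x,y\in B_{1/2}$ and set $\rho := |x-y|$. When $\rho$ is bounded below by an absolute constant, \eqref{eq:holder} is immediate from Theorem \ref{thm:cont} after absorbing constants, so I may assume $\rho$ small; then, by the scale invariance of \eqref{eq:outer}--\eqref{eq:inner} together with a routine covering argument, I may also assume that the ball $B_{3\rho}$ appearing in \eqref{eq:modulusofcont} lies inside a domain on which the higher integrability estimate applies. H\"older's inequality with conjugate exponents $p/2$ and $p/(p-2)$ then gives
\[ \int_{B_{3\rho}}|Df|^2 \;\leq\; |B_{3\rho}|^{1-2/p}\left(\int_{B_{3\rho}}|Df|^p\right)^{2/p} \;\leq\; C\,\rho^{2-4/p}\,\|Df\|_{L^p(B_{1/2})}^2 \;\leq\; C\,\rho^{2-4/p}\,E. \]
Inserting this bound into \eqref{eq:modulusofcont} and setting $\alpha := 1 - 2/p \in (0,1)$ yields
\[ \G(f(x),f(y))^2 \;\leq\; C\,\rho^{2\alpha}\,E + C\,\rho^2\,E \;\leq\; C\,\rho^{2\alpha}\,E, \]
and \eqref{eq:holder} follows at once by taking square roots.

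There is no genuine obstacle here: both input theorems are already at our disposal, so the corollary is a one-step interpolation. The only minor subtlety is the scaling/covering step used to guarantee that $B_{3\rho}$ sits inside the domain of higher integrability for every choice of $x,y\in B_{1/2}$, but this is handled in a completely standard way thanks to the scale invariance of the stationarity conditions.
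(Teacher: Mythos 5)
Your proposal is correct and is exactly the argument the paper has in mind: the paper gives no separate proof of Corollary \ref{cor:hoelder}, stating only that it follows ``immediately'' from Theorems \ref{thm:cont} and \ref{thm:highint}, and the interpolation you carry out (H\"older's inequality with exponent $p/2$ applied to the local term $\int_{B_{3|x-y|}}|Df|^2$ in \eqref{eq:modulusofcont}, using the reverse-H\"older bound from Theorem \ref{thm:highint}) is the standard and intended route. Your remarks on normalising $E=1$ to absorb the slightly non-scale-invariant phrasing of Theorem \ref{thm:highint}, and on the covering step needed to keep $B_{3\rho}$ inside the domain where the higher integrability applies, are appropriate and routine.
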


 We remark that Theorem \ref{thm:cont} and Corollary \ref{cor:hoelder} fail for weakly stationary maps, that is maps that are stationary for outer variations only, as shown by the following example.

\begin{remark}\label{rem.outerVariationInsufficient}
	For any $0<\epsilon<1$ we consider the function
	\[ u_\epsilon(x) = \left(1- \frac{\ln|x|}{\ln\epsilon}\right)_+\,,\quad x\in \R^2.\]
Note that $u_\epsilon$ is harmonic outside of $B_\epsilon$ and the sequence $(u_\eps)_\eps$ converges pointwise to $1$ for every $|x|>0$ as $\epsilon \downarrow 0$. 
Furthermore we can compute its energy to be
	\[ \int_{B_R} |Du_\epsilon|^2  = \frac{2\pi}{(\ln\epsilon)^2} (\ln R-\ln\epsilon)\,,\]
which converges to $0$ as $\epsilon \to 0$.
	Next define the $2$-valued map
	\[f_\epsilon(x)= \a{u_\epsilon(x)} + \a{-u_\epsilon(x)}\,.\]
	It is stationary with respect to the outer variation since 
	\[\mathcal{O}(f_\epsilon,\psi)= \int D u_\epsilon \cdot D(\psi(x,u_\epsilon)) - D u_\epsilon \cdot D(\psi(x,-u_\epsilon))= - \int_{\partial B_\epsilon} \frac{\partial u_\epsilon}{\partial \nu} \psi(x,0) - \frac{\partial u_\epsilon}{\partial \nu} \psi(x,0) = 0\,. \]
	Moreover we have for any $\sigma < 1 $ and $|x|\ge \epsilon^\sigma$
	\[\frac{\G(f_\epsilon(x), f_\epsilon(0))^2}{\int_{B_1}|Df_\epsilon|^2 }= \frac{|\ln\epsilon|}{2\pi} \left(1- \frac{\ln|x|}{\ln\epsilon}\right)_+\ge \frac{|\ln\epsilon|}{2\pi}(1-\sigma) \to + \infty \text{ as } \epsilon \downarrow 0\,.  \]
It follows that Corollary \ref{cor:hoelder} fails for outer stationary maps that are not stationary for inner variations. On the other hand, in Section \ref{subsec.higherintegrability} we will show that any $Q$-valued map with modulus of continuity of the form \eqref{eq:modulusofcont} satisfy Corollary \ref{cor:hoelder}, and therefore Theorem \ref{thm:cont} fails too for weakly stationary maps.
%(Hence any kind of H\"older regularity is excluded. It actually excludes any kind of modulus of continuity that is controlled by the integrability of $|Df|^2$, since in section \ref{subsec.higherintegrability} we will show that such a modulus of continuity can always be improved to H\"older continuity.)
\end{remark} 

In fact it is possible to modify the above example showing the stronger result that weakly stationary maps are not in general continuous.

\begin{proposition}\label{prop:weaknotcont}
    There exists a weakly stationary map $f\in W^{1,2}(B_1,\I2(\R))$ which is not continuous at $0$.
\end{proposition}

Next we prove a concentration compactness result for weakly stationary equicontinuous maps valid in every dimension. Given the decomposition in Subsection \ref{sec:splitting}, we will not need the full power of the following statement in the rest of the paper (i.e., we will only need the compactness part). However we state it in the most general form with possible future applications in mind.  
It is worth noticing that it holds in every dimension.

\begin{theorem}[Concentration compactness for equicontinuous and outer stationary maps]\label{thm:comp}
Let $(f_j)_j \subset W^{1,2}(B_1, \Iq(\R^n))$, $B_1\subset \R^m$, be a sequence of  maps such that for every $j\in \N$ the following properties hold:
\begin{enumerate}
    \item there exists a modulus continuity $\omega$, independent of $j$, such that
    \[
    \G(f_j(x),f_j(y))\leq \omega(|x-y|)\,,\qquad \forall x,y\in B_1\,, 
    \]
    \item $\int_{B_1}|Df_j|^2\,dx\leq C<\infty$;
    \item $\mathcal O(f_j, B_1)=0$.
\end{enumerate}
Then there exist $q\in \N$, vectors $a_1(k),\dotsc a_q(k)\in \R^n$ and multivalued functions $h_1\dotsc,h_q\in W^{1,2}(B_1,\I{Q_i}(\R^n))$, continuous with modulus of continuity $\omega$ and such that 
$$
\sum_{i=1}^q Q_i=Q
\quad\text{and}\quad \mathcal O(h_i, B_1)=0\,,\,\;\text{for all $i=1,\dotsc,q$} \,,$$ 
and  satisfying
\begin{gather}
\lim_{k\to \infty}\sup_{B_r} \G(g_k(x),f_k(x))=0 \label{eq:conccomp1}\\
\lim_{k\to \infty}\int_{B_r}\left||Df_k|^2-|Dg_k|^2 \right|=0  \,, \label{eq:conccomp2}
\end{gather}
where $g_k(x):=\sum_{i=1}^qa_i(k)\oplus h_i$. 
\end{theorem}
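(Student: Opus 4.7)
The plan is to extract a subsequence together with a finite cluster structure in the sheets of the $f_k$ at a base point $x_0\in B_1$; this provides simultaneously the translates $a_i(k)$ and, after recentering, the limit maps $h_i$. After a first subsequence extraction one may assume that each pairwise distance $|(f_k)_\ell(x_0)-(f_k)_{\ell'}(x_0)|$ converges in $[0,+\infty]$. The relation ``the limit is finite'' is an equivalence on $\{1,\dotsc,Q\}$ producing $q$ classes $C_1,\dotsc,C_q$ of sizes $Q_i$ with $\sum_i Q_i=Q$. Choosing representatives $\ell_i\in C_i$ and setting $a_i(k):=(f_k)_{\ell_i}(x_0)$, within each class the sheets at $x_0$ stay within a bounded radius $D_0$ of $a_i(k)$, while $|a_i(k)-a_{i'}(k)|\to+\infty$ across classes. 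The equicontinuity $\G(f_k(x),f_k(x_0))\le\omega(2)=:M$ propagates this picture globally on $B_1$: intra-class distances stay $\le D_0+2M$, whereas inter-class distances diverge. For $k$ large one therefore obtains a decomposition $f_k=\sum_{i=1}^q f_k^{(i)}$ into disjoint $\I{Q_i}$-valued pieces, and setting $\tilde f_k^{(i)}:=(-a_i(k))\oplus f_k^{(i)}$ produces $\I{Q_i}$-valued maps which are uniformly bounded in sup-norm, share the modulus $\omega$, and satisfy $\|D\tilde f_k^{(i)}\|_{L^2}\le C$.

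To see that each piece inherits outer stationarity, test $\mathcal O(f_k,\cdot)=0$ with an admissible function $\chi_k^{(i)}(u)\psi(x,u)$, where $\chi_k^{(i)}$ is a smooth target-space cut-off equal to $1$ on a $(D_0+2M+1)$-neighborhood of $a_i(k)$ and supported away from the analogous neighborhoods of the other $a_{i'}(k)$ (admissible for large $k$ by the pairwise divergence of the $a_{i'}(k)$). Since $\chi_k^{(i)}$ is identically $1$ on every sheet value of $f_k^{(i)}$ and identically $0$ on every sheet value of the other pieces, the identity reduces to $\mathcal O(f_k^{(i)},\psi)=0$, and translation invariance yields $\mathcal O(\tilde f_k^{(i)},\psi)=0$ for every admissible $\psi$. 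The $\tilde f_k^{(i)}$ being equicontinuous and pointwise bounded, Arzel\`a--Ascoli in $(\I{Q_i}(\R^n),\G)$ and weak compactness in $W^{1,2}$ deliver (along a further subsequence) $\tilde f_k^{(i)}\to h_i$ uniformly on $B_1$ and $D\tilde f_k^{(i)}\weak Dh_i$ weakly in $L^2$, with $h_i$ continuous with modulus $\omega$.

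The heart of the argument is to prove $|D\tilde f_k^{(i)}|^2\to|Dh_i|^2$ in $L^1_{\mathrm{loc}}$, which delivers simultaneously $\mathcal O(h_i,B_1)=0$ (by allowing passage to the limit in the quadratic gradient term of the outer variation) and \eqref{eq:conccomp2}. The basic tool is the Caccioppoli-type identity obtained from $\mathcal O(\tilde f_k^{(i)},\eta(u-c))=0$ for $c\in\R^n$ constant and $\eta\in C_c^\infty(B_1)$, namely
\[
\int\eta\,|D\tilde f_k^{(i)}|^2\,dx \;=\; -\frac12\int D\eta\cdot D\sigma_k^{(c)}\,dx,\qquad \sigma_k^{(c)}(x):=\sum_{\ell=1}^{Q_i}\bigl|(\tilde f_k^{(i)})_\ell(x)-c\bigr|^2.
\]
The right-hand side is linear in $D\tilde f_k^{(i)}$ paired against the uniformly convergent scalar $\sigma_k^{(c)}$, so by weak-strong pairing it converges to the same expression with $h_i$ in place of $\tilde f_k^{(i)}$. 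Localizing $\eta$ on small balls and choosing $c$ as a best single-point approximation of the sheet values there exploits the equicontinuity to control $\sigma_k^{(c)}$ by a quantity vanishing with the scale --- this is the multi-valued analog of the fact that equicontinuity excludes harmonic bubbles. Combined with the weak $L^2$ convergence of the derivatives and lower semicontinuity, one obtains strong $L^1_{\mathrm{loc}}$ convergence of the energy densities.

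Finally, setting $g_k:=\sum_{i=1}^q a_i(k)\oplus h_i$, the pairwise divergence of the $a_i(k)$ gives (for large $k$)
\[
\G(g_k(x),f_k(x))^2 \;=\; \sum_{i=1}^q\G(h_i(x),\tilde f_k^{(i)}(x))^2 \;\longrightarrow\; 0
\]
uniformly on $B_r$, proving \eqref{eq:conccomp1}, while the additivity $|Df_k|^2=\sum_i|D\tilde f_k^{(i)}|^2$, $|Dg_k|^2=\sum_i|Dh_i|^2$ together with the energy-density convergence gives \eqref{eq:conccomp2}. The main obstacle throughout is the third paragraph: turning the Caccioppoli identity and the equicontinuity into genuine non-concentration of the Dirichlet energy, which is what legitimizes the passage to the limit in the quadratic gradient term of the outer variation.
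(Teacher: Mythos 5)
Your overall architecture — split the sheets at a base point into classes according to diverging versus bounded pairwise distances, recenter each class, pass to uniform and weak $W^{1,2}$ limits, and upgrade to strong convergence of energy densities via a Caccioppoli-type identity from the outer variation — is close in spirit to the paper's argument, which likewise tests the outer variation with a target-space cutoff to control concentration. But the crucial third paragraph has a genuine gap.

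You claim that localizing $\eta$ on $B_r(x_0)$ and choosing $c$ as a best single-point approximation of $\tilde f_k^{(i)}$ there makes $\sigma_k^{(c)}=\G(\tilde f_k^{(i)},Q_i\a{c})^2$ "vanish with the scale." This is true only where the limit $h_i$ takes a collapsed value $Q_i\a{p}$. When $Q_i>1$ this need not happen: your class decomposition groups sheets whose pairwise distances merely stay \emph{bounded}, not those that stay close, so $h_i(x_0)$ can perfectly well have distinct sheets. In that case \emph{every} single point $Q_i\a{c}$ satisfies $\G(h_i(x_0),Q_i\a{c})\ge \diam(h_i(x_0))/\sqrt2>0$, and by uniform convergence the same lower bound (up to a factor $1/2$) holds for $\sigma_k^{(c)}$ on $B_r(x_0)$ for all large $k$ and all small $r$. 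The Caccioppoli identity then only yields $\int_{B_r}|D\tilde f_k^{(i)}|^2\le \tfrac{C}{r}\,\diam(h_i(x_0))\int_{B_{2r}}|D\tilde f_k^{(i)}|$, a bounded reverse Poincar\'e inequality that carries no non-concentration information. Hence you cannot conclude that the defect measure vanishes near such points, and the passage to the limit in the quadratic gradient term of $\mathcal O(\tilde f_k^{(i)},\psi)$, as well as \eqref{eq:conccomp2}, is not justified.

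The paper avoids this by an induction on $Q$ combined with a two-case analysis of the defect measure $\mu$: on the set where the limit collapses to $Q\a{0}$, the cutoff $\psi(x,u)=\eta(x)\theta(|u|/\delta)u$ gives $\mu(\{|f|<\delta\})\le C\delta$, exactly your Caccioppoli mechanism, which is legitimate there; on the set where $|f|>0$, the maps split locally into pieces of strictly smaller multiplicity, and the inductive hypothesis gives strong convergence of each piece. Your single-pass argument is missing both the induction and the case distinction, and these cannot be replaced by a clever choice of $c$, because no single point is a good approximation of a genuinely multi-valued limit.
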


Finally we will use the previous theorems to prove the optimal bound on the dimension of the singular set of $2$-dimensional stationary $Q$-valued maps. We recall that a point \emph{$x\in \Omega$ is regular} if there exists a neighborhood $B\subset \Omega$ of $x$ and $Q$ analytic functions $f_i \colon B \to \R^n$ such that 
\[
f(y) = \sum_{i=1}^Q \a{f_i(y)}\qquad \text{ for
almost every }y \in B\,,
\]
and either $f_i(x)\neq f_j(x)$ for every $x\in B$ or $f_i \equiv f_j$. The \emph{singular set $\Sigma_f$ of $f$} is the complement in $\Omega$ of the set of regular points.

\begin{theorem}[Dimension of the singular set]\label{thm:dimension} Let $f\in W^{1,2}(\Omega, \Iq(\R^n))$, with $\Omega \subset \R^2$, be a stationary map. Then $\dim_{\cH}(\Sigma_f)\leq 1$.
\end{theorem}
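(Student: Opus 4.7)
The plan is to proceed by strong induction on $Q$. The base case $Q=1$ is trivial since $f$ is then a single classical harmonic function with $\Sigma_f=\emptyset$. For the inductive step, assume the bound for all $Q'<Q$ and let $f$ be $Q$-valued stationary. By Theorem~\ref{thm:cont} each value $f(x_0)$ admits a unique unordered decomposition $f(x_0)=\sum_{i=1}^{q(x_0)}Q_i\a{a_i(x_0)}$ with pairwise distinct $a_i(x_0)\in\R^n$ and $\sum_i Q_i=Q$. Whenever $q(x_0)\geq 2$, continuity forces the $q$ clusters to stay separated on a small ball $B\ni x_0$, so that $f|_B$ splits canonically as $f|_B=\sum_{i=1}^q h_i$ with each $h_i\in W^{1,2}(B,\I{Q_i}(\R^n))$ still stationary; this is exactly the content of the decomposition established in Section~\ref{sec:splitting}. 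Since $Q_i<Q$, the inductive hypothesis yields $\dim_\cH(\Sigma_{h_i})\leq 1$, and the inclusion $\Sigma_f\cap B\subset \bigcup_i\Sigma_{h_i}$ gives $\dim_\cH(\Sigma_f\cap B)\leq 1$. Covering $\Omega$ by countably many such balls, the problem reduces to bounding the dimension of the \emph{collapsed singular set}
\[
\Sigma^* := \bigl\{x_0\in\Sigma_f : f(x_0)=Q\a{a(x_0)}\ \text{for some } a(x_0)\in\R^n\bigr\}.
\]

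To analyse $\Sigma^*$ the next step is to reduce to the average-zero case. Testing \eqref{eq:outer} with $\psi(x,u)=\varphi(x)v$, $v\in\R^n$, shows that $\eta\circ f$ is classically harmonic, and replacing $f$ by $\tilde f:=f\ominus\eta\circ f$ preserves stationarity, has zero average, and satisfies $\Sigma^*=\{\phi=0\}$ for the continuous non-negative function $\phi(x):=\G(\tilde f(x),Q\a{0})^2=\sum_i|\tilde f_i(x)|^2$. A direct computation using the outer variation \eqref{eq:outer} (applied to fields that rigidly translate the image) gives $\Delta\phi=2|D\tilde f|^2\geq 0$ distributionally, so that $\phi$ is subharmonic; unless $\phi\equiv 0$ on a connected component (in which case $f=Q\a{\eta\circ f}$ is everywhere regular there) the strong maximum principle forces $\Sigma^*$ to have empty interior.

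To control $\dim_\cH(\Sigma^*)$ itself the plan is a blow-up argument. At each $x_0\in\Sigma^*$ the rescalings $\tilde f_{x_0,r}(y):=\tilde f(x_0+ry)$ have uniformly bounded Dirichlet energy by the two-dimensional scale invariance, and are uniformly H\"older on $B_{1/2}$ by Corollary~\ref{cor:hoelder} applied with the explicit modulus \eqref{eq:modulusofcont}. The compactness part of Theorem~\ref{thm:comp} then extracts a tangent map $\tilde f_\infty$ that is outer stationary, equicontinuous with the same modulus, averaged zero, and satisfies $\tilde f_\infty(0)=Q\a{0}$. Whenever $\tilde f_\infty\not\equiv Q\a{0}$, at any point $y_0$ where it is non-trivial the decomposition of Step~1 applies and splits $\tilde f_\infty$ into outer-stationary pieces of value strictly smaller than $Q$, so the inductive hypothesis applies at the tangent-map level. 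A covering argument in the spirit of Federer's dimension reduction, together with the higher integrability of Theorem~\ref{thm:highint} (which places $\phi$ in $W^{2,1+\delta}_{\mathrm{loc}}$ and provides quantitative strong $L^2$ convergence of the gradients along blow-ups), then transfers the bound from the tangent maps back to $\Sigma^*$ itself.

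The main obstacle lies in this last transfer: in the absence of Almgren's frequency monotonicity---which in the Dir-minimizing setting guarantees unique, homogeneous tangent maps at every singular point---one cannot a priori identify a canonical scaling at each $x_0\in\Sigma^*$, and the dimension reduction must be executed using only the quantitative H\"older modulus \eqref{eq:modulusofcont}, the higher integrability estimate, and the compactness provided by Theorem~\ref{thm:comp}. The two-dimensionality of the domain enters essentially twice: in the scale invariance of the Dirichlet energy (keeping the rescalings in a compact family) and in the fact that the inductive pieces contribute a singular stratum of dimension at most one.
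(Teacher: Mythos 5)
Your inductive scheme is blocked by a fatal error in the very first step. You write that when $f$ splits locally as $f|_B=\sum_{i=1}^q h_i$, ``each $h_i\in W^{1,2}(B,\I{Q_i}(\R^n))$ [is] still stationary,'' citing Section~\ref{sec:splitting}. This is false, and the paper is explicit about it: the remark in the subsection ``Differences with Dir-minimizing case'' gives a $4$-valued stationary map that splits into two $2$-valued pieces, neither of which is stationary, precisely because the inner variation formula~\eqref{eq:inner} does not decouple under splitting. Correspondingly, Proposition~\ref{prop:decomposition}(2) asserts only that the pieces $g_k$ are stationary \emph{with respect to the outer variations}; inner stationarity is recovered only for the recombined map $\tilde f=\sum_k g_k$ (item (4) of that proposition). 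Since the inductive hypothesis requires full stationarity, you cannot apply it to the $h_i$, and the reduction to $\Sigma^*$ does not hold as stated.

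Your treatment of $\Sigma^*$ also diverges from what actually closes the argument. You believe you lack Almgren's frequency monotonicity (``in the absence of Almgren's frequency monotonicity\dots one cannot a priori identify a canonical scaling'') and consequently leave the Federer-type dimension reduction as an acknowledged gap. But the frequency function \emph{is} monotone for stationary maps here: Theorem~\ref{thm:frequencymon} (quoting \cite[Theorem 3.15]{DS}) relies only on the outer and inner variation identities~\eqref{eq:outer}--\eqref{eq:inner}, both of which $f$ satisfies. Using it, Theorem~\ref{thm:bu} produces genuinely homogeneous tangent maps, which is exactly what the dimension reduction needs. The paper's route is therefore quite different from yours: it does \emph{not} induct on $Q$; it decomposes $\sing(f)$ into the relatively open strata $S_k=\sing(f)\cap\{\operatorname{card}(f)=k\}$, observes $S_Q=\emptyset$, picks a density point $x_0\in S_k$ of positive $\mathcal H^t$-density for a contradiction, applies Proposition~\ref{prop:decomposition} to manufacture a \emph{single} stationary map $\tilde f$ with $\tilde f(x_0)=Q\a{0}$ whose collapse set contains $\sing(f)$ near $x_0$, and then performs two successive blow-ups (via Theorem~\ref{thm:bu} and \cite[Lemma 3.24]{DS}) to reach a $1$-homogeneous $1$-dimensional stationary map with a positive-measure collapse set, contradicting Lemma~\ref{lem:classificationin1d}. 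Your subharmonicity observation for $\phi=\G(\tilde f,Q\a 0)^2$ is correct but only yields empty interior of $\Sigma^*$, which is far from the dimension bound; and higher integrability (Theorem~\ref{thm:highint}) does not by itself supply the homogeneity that makes the dimension reduction work. To fix your proposal you would need to (i) stop assuming the split pieces are inner stationary, (ii) replace them with the recombined $\tilde f$ as in Proposition~\ref{prop:decomposition}, and (iii) invoke frequency monotonicity and the resulting homogeneous tangent maps to carry out the Federer reduction that you currently leave open.
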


It should be possible to improve the above result to $1$-rectifiability of the singular set and locally finite length, by using the same techniques as in \cite{DMS} (see Remark \ref{rm:mink}). We also point out that under additional regularity on the map $f$ more structure on the set of branch points is known thanks to the work of Krummel and Wikramasekera, \cite{KW1,KW2}.

\subsection{Differences with Dir-minimizing case} The proofs of the above theorems in the Dir-minimizing case are achieved via comparison with suitable competitors and using induction on the multiplicity $Q$. Of course, no competitor argument is available in our setting. Moreover, while if a Dir-minimizing map splits as the sum of two maps, then such maps are Dir-minimizing separately, this is not the case for stationary maps, since the inner variation doesn't split, as the following example illustrates. Notice that, thanks to Corollary \ref{cor:hoelder} together with Lemma \ref{lem.OuterInvariance} and \ref{lem.InnerInvariance}, the following example requires $Q\ge 4$.

\begin{remark}
Consider the the 2-valued map
\[g(x)=\begin{cases} \a{x} + \a{-x} &\text{ for } x> 0 \\ 2\a{0} & \text{ for } x< 0 \end{cases}\,.\]
One can directly check that $g$ satisfies the outer variation since $g$ is the composition of linear hence harmonic functions on $x>0$ and $x<0$ furthermore it satisfies $\lim_{x\uparrow 0} g(x)= 2\a{0}=\lim_{x\downarrow0 } g(x)$. Nonetheless it does not satisfy the inner variation since 
\[|g'(x)|^2=\begin{cases} 2 &\text{ for } x>0 \\ 0 &\text{ for } x <0 \end{cases}\,.\]
If instead we consider for any value $a \in \R$ the 4-valued map
\[ f(x) = (g(x)\oplus a)+ (g(-x) \oplus (-a))\,,\]
then we obtain a stationary map, since both $g(x)$ and $g(-x)$ are stationary for the outer variation and $f$ satisfies trivially the inner variations since $|f'(x)|^2=2$ for all $x$. Choosing $a$ sufficient large we obtain therefore a stationary map that splits locally in two pieces which are not stationary separately.
\end{remark}

Finally, another obvious difference is the dimension of the singular set: in minimizing case classical singularities do not appear and the codimension of the singular set is $2$, in the stationary case it is at least $1$. This makes the classification of $2$-dimensional homogeneous global stationary maps harder, as illustrated in the following example.

\begin{remark}\label{rem:examplehomogeneous}
	Let us fix two points $T_{\pm} \in \Iqn(\R^n)$ such that $|T_+|=|T_-|\neq0$. Consider the $1$-homogeneous stationary map 
	\[ h(x) = \begin{cases}
			x T_+ &\text{ if } x>0\\
			x T_- &\text{ if } x\le 0
		\end{cases}\,.\]
		and its extension to $\C$ by $h(x,y)=h(x)$. 
		Finally we may consider for every $Q_1>1$ the map 
		\[f(z)= \sum_{w^{Q_1} = z} h(w)\,.\]
		It is an exercise to check that $f$ is stationary itself. 
\end{remark}

\subsection{Acknowledgements} 
The first author was partially supported by the German Science Foundation DFG in context of the Priority Program SPP 2026 ``Geometry at Infinity''. The second author acknowledges the support of the NSF Career Grant DMS 2044954. 
\subsection{Conflict of interests and data availability statements:}
The authors have no competing interests to declare. Data sharing is not applicable to this article as no data sets were generated or analysed during the current study.

\section[Preliminaries and splitting]{Preliminaries and splitting $Q$-points}
In this section we recall some basic notations and definitions about stationary $Q$-valued maps, and we introduce a useful combinatoric lemma.

\subsection{Preliminaries and notations}\label{ss:notation}
For the sake of completeness we recall some of the  basic notations and definitions concerning $Q$-valued maps. For a detailed treatment we refer the reader to \cite{DS}, whose notation and conventions we follow.

We denote by $\a{p_i}$ the Dirac mass in $p_i\in \R^n$ and we define the \emph{space of $Q$-points}
\[
\Iq(\R^n):=\left\{\sum_{i=1}^Q\a{p_i}\,:\,p_i\in \R^n \text{ for every }i=1\dotsc,Q \right\}\,.
\]
We define a \emph{distance function on $\Iq(\R^n)$} by setting for every $T_1,T_2\in \Iq(\R^n)$, with $T_1=\sum_{i=1}^Q\a{p_i}$ and $T_2=\sum_{i=1}^Q\a{s_i}$, 
\[
\mathcal G(T_1,T_2):=\min_{\sigma \in \mathcal P_Q} \sqrt{\sum_i\left|p_i-s_{\sigma(i)} \right|^2}\,,
\]
where $\mathcal P_Q$ denotes the group of permutations of $\{1,\dotsc,Q\}$. If $T=\sum_{i=1}^Q\a{p_i}$, then we define the \emph{support}, the \emph{diameter} and the \emph{separation} of $T$ respectively by
\[
\spt(T)=\{p_1,\dotsc,p_Q\}
\,,\quad
\diam(T):=\max_{i,j}|p_i-p_j|
\quad\text{and}\quad
\sep(T):=\min\{|p_i-p_j|\,:\,p_i\neq p_j\}\,.
\]
Moreover given $T=\sum_{i=1}^Q\a{p_i}$ and a vector $v\in \R^n$, we define
\[
T\oplus v:=\sum_{i=1}^Q\a{p_i+v}
\qquad \text{and}\qquad
T\ominus v:=\sum_{i=1}^Q\a{p_i-v}\,.
\]
We define the space of \emph{average free $Q$-points} by
\[
\Iqsn:=\left\{T\in \Iq(\R^n)\,:\, \eta\circ T=0 \right\}\,,
\]
where $\eta\circ T:=\frac1Q \sum_ip_i$ is the \emph{average} of $T=\sum_{i=1}^Q\a{p_i}$.

Any map $f\colon \Omega \to \Iqs$ will be called a \emph{$Q$-valued map}. We refer the reader to \cite{DS} for the Sobolev theory of $Q$-valued maps. In particular, we observe that \cite[Chapter 4]{DS} provides such  a theory independently of Almgren's embedding. Finally the derivation of \eqref{eq:outer} and \eqref{eq:inner} are contained in \cite[Section 3.1]{DS}.

In the sequel we will often use the following useful observations concerning the invariance of outer and inner variation under shift by harmonic functions.

\begin{lemma}\label{lem.OuterInvariance}
	Let $h: \Omega \to \R^n$ be a single-valued harmonic map then for any $Q$-valued map $f \in W^{1,2}(\Omega, \Iq)$ we have 
	\[ \mathcal O(f\oplus h,\psi_h) = \mathcal O(f, \psi)\;, \]
	where $\psi_h(x,z)=\psi(x, z-h(x))$.
\end{lemma}
\begin{proof}
	It is sufficient to show that for any smooth single-valued map $h$ one has 
	\[O(f\oplus h,\psi_h) = \mathcal O(f, \psi) + Q \int_\Omega \langle Dh \, : \, D (\eta \circ\psi(x,f)) \rangle \, dx\,.\]
	Since $D_x\psi_h(x,z)= (D_x\psi)(x,z-h) - D_y\psi(x,z-h)\cdot Dh$ we have 
	\[D_x\psi_h(x,f+h) + D_y\psi_h(x,f+h)\cdot (Df + Dh)(x) = (D_x\psi)(x,f) + (D_y\psi)(x,f)Df(x) \,,  \]
	from which the above formula follows. 
\end{proof}

\begin{remark}
If 	$f \in W^{1,2}(\Omega, \Iq)$ is stationary with respect to the outer variation then its average $\eta \circ f$ is a single-valued harmonic function. 
This can be seen by using a variations $\psi(x,z)= \tilde{\psi}(x)$ that are independent of $z$. In particular we deduce that $\eta \circ f $ is a smooth function. This had been already observed in \cite[Lemma 3.23 (a)]{DS}.
\end{remark}

\begin{lemma}\label{lem.InnerInvariance}
	Let $h: \Omega \to \R^n$ be a single-valued harmonic map and $f \in W^{1,2}(\Omega, \Iq)$ stationary with respect to the outer variations then we have 
	\[\mathcal I(f \oplus h, \phi) = \mathcal I(f, \phi) + Q\, \mathcal I(h, \phi)\,.\]
\end{lemma}
\begin{proof}
	Expanding all quadratic expressions $\partial_i (f+h) \partial_j (f+h)$ leads to 
	\begin{align*}
		\mathcal I(f \oplus h, \phi) =& \mathcal I(f, \phi) + Q\, \mathcal I(h, \phi)\\&+ 2Q \left( \int \langle Dh \colon D\eta\circ f \cdot D\phi \rangle + \langle D\eta \circ f \colon Dh \cdot D\phi\rangle - \langle Dh \colon D\eta \circ f\rangle \Div(\phi) \right)\,.
	\end{align*}
	We have to show that the last integral is vanishing for the two harmonic functions $h$ and $g=\eta \circ f$. This follows by integrating by parts and using the following identity
	\begin{align*}
		&\langle Dh \colon Dg\cdot D\phi\rangle + \langle Dg \colon Dh\cdot D\phi\rangle\\& = \sum_{ij} \langle \partial_ih, \partial_i ( \partial_j g \phi^j)\rangle + \langle \partial_ig, \partial_i ( \partial_j h \phi^j)\rangle - (\langle \partial_ih, \partial_{ij}g\rangle + \langle \partial_ig, \partial_{ij}h\rangle)\phi^j\\
		&= \sum_{ij} \langle \partial_ih, \partial_i ( \partial_j g \phi^j)\rangle + \langle \partial_ig, \partial_i ( \partial_j h \phi^j)\rangle - \partial_j (\langle \partial_ih, \partial_ig\rangle )\phi^j\,.
	\end{align*}
	
\end{proof}

In particular we deduce that adding an harmonic function $h$ to a stationary map $f$ will preserve stationariety.

\subsection[Splitting Q-points]{Splitting $Q$-points}

Given a point $S\in \Iq(\R^n)$, with $S=\sum_{i=1}^q Q_i\a{s_i}$, $s_i\neq s_j$ for $i\neq j$, we define
\[
r_i(S)=r_i:=\min\{\dist(s_i, s_j)\,:\,j\neq i\}\,,\qquad i=1,\dots,q\,,
\]
and we define the \emph{projection neighborhoods of $S$} by
\[
\mathcal P_\eps(S):=\bigcup_{i=1}^q B_{\eps r_i}(s_i)\,,\qquad 0<\eps<\frac14\,.
\]
We will associate to such a point $S$ a Lipschitz retraction $P_S: \R^n \to \mathcal P_{\frac18}(S)$ such that $P_S(t) = \eta(S)$ if $ t \notin \mathcal{P}_{\frac14}(S)$ with $\operatorname{Lip}(P_S) \le C \,\sep(S)^{-1} $.
\begin{remark}[Balanced splitting]\label{rem:natsplit} Notice that the map $P_S$ that projects each ball $B_{\eps r_i}(s_i)$ on the center $s_i$ is a well defined Lipschitz retraction, and moreover if $T\in \Iq(\R^n)$ is such that 
\[
\spt(T)\subset \mathcal P_\eps(S)
\]
then we have a natural splitting of $T=\sum_{i=1}^q T_i$, where
\begin{equation}\label{eq:naturalsplit}
\spt(T_i)\subset B_{\eps r_i}(s_i)\,,\qquad \forall i=1\dots q\,.
\end{equation}
Using the notation of currents we have $T_i = T\res B_{\eps r_i}(s_i)$. 
We will call such a $T$ \emph{balanced} if the mass of $T$ inside the ball $B_{\epsilon r_i}(s_i)$ coincides with the mass of $S$ inside the same ball i.e. $\M(T_i)= Q_i= \M(T\res B_{\eps r_i}(s_i))$ for all $i$.

Furthermore note that if $T \in \mathcal P_\epsilon(S)$ is balanced and $\epsilon <\frac14$ then
\begin{equation}\label{eq:splitsum}\G^2(S,T)=\sum_{i=1}^q \G^2(T_i,Q_i \a{s_i})\,.
\end{equation}
This follows from the pigeonhole property. 
Indeed let $T= \sum_{j=1}^Q \a{t_j}$ and 
let $(I_i)_{i=1}^q$ be the partition of $\{1, \dotsc , Q\}$ given by $I_i=\{ \sum_{j<i} Q_j +1, \dotsc, \sum_{j\le i} Q_j\}$ and select a permutation $\sigma \in \mathcal{P}_Q$ that realises the distance, i.e. 
\[\G^2(S,T) = \sum_{i=1}^q \sum_{j\in I_i} |s_i-t_{\sigma(j)}|^2\,.\]

Assume by contradiction it is not true that $\sigma(I_i)=I_i$, then by the pigeonhole property there must be a ``k-chain'' (after relabeling the $I_i$'s) of indices $j_1, \dotsc, j_k$ with the property that 
\begin{enumerate}
    \item $j_i \in I_i$;
    \item $\sigma(j_i) \in I_{i+1}$ for $i=1, \dotsc, k-1$ and $\sigma(j_k)\in I_1$.
\end{enumerate}
Then we have on the one hand
\begin{align*}
    \sum_{i=1}^k |s_{i}-t_{\sigma(j_i)}|^2
    \geq (1-\epsilon)^2 \sum_{i=1}^k r_i^2\,,
\end{align*}
but on the other hand we have as a competitor
\[ \sum_{i=1}^k |s_i-t_{j_i}|^2 \le \epsilon^2 \sum_{i=1}^k r_i^2\,.\]
This contradicts the optimality of $\sigma$.

With a similar proof one can show that if $\spt S, \spt S'\in \mathcal P_\epsilon(T)$ are balanced and $\epsilon <\frac18$ then
\begin{equation}\label{eq:splitsum2}\G^2(S,S')=\sum_{i=1}^q \G^2(S_i,S'_i )\,\,.
\end{equation}

Additionally one can construct, compare \cite[Lemma 3.7]{DS}, Lipschitz retractions \[\chi_i \colon \Iqs \to B_{2r_i}\left(Q_i\a{s_i}\right)\subset \A_{Q_i}(\R^n)\,.
\]
such that for all $T \in \mathcal{P}_\epsilon(S)$ balanced one has $T= \sum_{i=1}^Q \chi_i(T)$, 
since $\chi_i(T)= T_i$ defined above. In particular this implies that if $f\in W^{1,p}(\Omega, \Iq)$ has the property that $f(x) \in \mathcal{P}_\epsilon(S)$, balanced for a.e. $x$, then $f$ splits in the following sense: setting $f_i:=\chi_i\circ f \in W^{1,p}(\Omega, \I{Q_i})$ one has $f=\sum_{i=1}^q f_i$ and $|Df(x)|^2= \sum_{i=1}^qe |Df_i(x)|^2$ for a.e. $x$.

Let us note that if we apply this observation to the setting of stationary maps one deduce from the fact that the balls $B_{\epsilon r_i}(s_i)$ are disjoint and the outer variation localises that 
if $f\in W^{1,2}(\Omega, \Iq)$ takes values a.e. in $\mathcal{P}_\epsilon(S)$ then 
\begin{equation}
    \mathcal{O}(f,\tilde{\psi})= \sum_{i=1}^q\mathcal{O}(f_i,\psi_i)
\end{equation}
where $\tilde{\psi}= \sum_{i=1}^Q \theta_i(z)\psi_i(x,z) $ where $\theta_i=1$ on $B_{\epsilon r_i}(s_i)$ and supported in $B_{2\epsilon r_i}(s_i)$.

%but we will not use this in the sequel.
\end{remark}
Next we recall the following splitting lemma from \cite{DS1}. 

\begin{lemma}[{\cite[Lemma 3.8]{DS1}}]\label{lem:camilloemanuele}
For every $0 < \eps < 1$, we set $\beta(\eps, Q) = (\frac{\eps}{3})^{3^Q}$ . 
Then, for every $T \in \Iq$ there exists a point $S\in \Iq$ such that
\begin{gather} 
\beta(\eps, Q) \, \operatorname{diam}(T ) \leq \sep (S) < \infty,\label{eq:sep1} \\
\cG(S, T ) \leq \eps \,\sep (S)\,,\label{eq:sep2}\\
\spt(S) \subset \spt(T)\,.\label{eq:sep3}
\end{gather}
\end{lemma}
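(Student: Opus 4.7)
The lemma is a purely combinatorial statement about the structure of $Q$-tuples in $\R^n$, and the plan is to prove it by induction on $q$, the number of distinct points in $\spt(T)$ (with $q \le Q$), establishing the slightly stronger conclusion that $\sep(S) \ge \beta(\eps, q) \diam(T)$; since $\beta(\eps, \cdot)$ is decreasing, this implies \eqref{eq:sep1}. The base case $q = 1$ is immediate, since $\sep(T) = +\infty$ and $S := T$ satisfies all three conditions trivially.

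For the inductive step I would run the following dichotomy. If $\sep(T) \ge \beta(\eps, q) \diam(T)$, the choice $S = T$ already works. Otherwise there exist distinct points $p, p' \in \spt(T)$ with $|p - p'| < \beta(\eps, q) \diam(T)$. Construct $\tilde T \in \Iq$ by identifying $p'$ with $p$, i.e.\ replacing every copy of $p'$ in $T$ with $p$. Then $\tilde T$ has at most $q - 1$ distinct values in its support, $\spt(\tilde T) \subsetneq \spt(T)$, $\diam(\tilde T) \ge (1 - \beta(\eps, q)) \diam(T)$, and $\G(T, \tilde T) \le \sqrt{Q}\,\beta(\eps, q) \diam(T)$ (the contribution to $\G^2$ is at most $Q\,|p - p'|^2$).

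Applying the inductive hypothesis to $\tilde T$ produces $S \in \Iq$ with $\spt(S) \subset \spt(\tilde T) \subset \spt(T)$, $\sep(S) \ge \beta(\eps, q - 1) \diam(\tilde T)$, and $\G(S, \tilde T) \le \eps \sep(S)$. Property \eqref{eq:sep3} is immediate; \eqref{eq:sep1} follows from the recursive identity $\beta(\eps, q) = \beta(\eps, q-1)^3$, which gives $\beta(\eps, q-1)(1 - \beta(\eps, q)) \ge \beta(\eps, q)$; and \eqref{eq:sep2} is obtained from the triangle inequality
\[
\G(S, T) \le \G(S, \tilde T) + \G(\tilde T, T) \le \eps \sep(S) + \sqrt{Q}\,\beta(\eps, q) \diam(T),
\]
where the merging term is absorbed using the cubic relation $\beta(\eps, q) = \beta(\eps, q-1)^3 \ll \eps \beta(\eps, q-1)/\sqrt{Q}$ together with $\sep(S) \gtrsim \beta(\eps, q-1) \diam(T)$.

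The main obstacle is the absorption step in the triangle inequality: a naive application leaves a leftover merging error term, so one actually has to prove a marginally strengthened inductive statement (for instance with $\G(S, T) \le \tfrac{\eps}{2} \sep(S)$, or with a geometrically shrinking error budget) so that there is room at each step to absorb the newly introduced merging contribution. The triple-exponential form $\beta(\eps, Q) = (\eps/3)^{3^Q}$ is precisely what guarantees that this slack survives through all $Q$ nested inductive applications: at each level, the admissible merging scale is cubically smaller than the guaranteed new separation, which is exactly the gain needed to close the induction after absorbing the $\sqrt{Q}$ factor coming from $\G$ itself.
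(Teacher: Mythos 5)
The paper does not prove this lemma; it is imported verbatim from \cite[Lemma 3.8]{DS1}, so there is no internal argument to compare against. Judged on its own terms, your merging-by-induction scheme is the natural one and is essentially the argument one finds in the literature: induct on the number of distinct values in $\spt(T)$, stop if already well-separated, otherwise collapse a close pair and invoke the inductive hypothesis. Your base case, the choice of $\tilde T$, the estimates $\diam(\tilde T)\ge(1-\beta(\eps,q))\diam(T)$ and $\G(T,\tilde T)\le\sqrt{Q}\,|p-p'|$, and the verification of \eqref{eq:sep1} and \eqref{eq:sep3} are all correct.

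The place that is not yet a proof is precisely the step you flag as ``the main obstacle,'' and one of your two proposed fixes does not in fact work. Strengthening the inductive statement to $\G(S,T)\le\tfrac{\eps}{2}\sep(S)$ is no help: applying it at level $q-1$ and adding the new merging error still produces a bound of the form $\tfrac{\eps}{2}\sep(S)+\text{err}$, which is \emph{larger} than $\tfrac{\eps}{2}\sep(S)$, so the induction does not recurse -- a fixed, level-independent slack cannot absorb an error that is added afresh at every level. What does work is the second option you mention (a level-dependent budget), but your justification for it is loose. Unrolling the induction, if the process stops at level $q_0$ then
\[
\G(S,T)\le \sqrt{Q}\sum_{j>q_0}\beta(\eps,j)\,\diam T \le 2\sqrt{Q}\,\beta(\eps,q_0)^3\,\diam T,
\qquad
\sep(S)\ge \tfrac12\,\beta(\eps,q_0)\,\diam T,
\]
so the ratio is controlled by $4\sqrt{Q}\,\beta(\eps,q_0)^2=4\sqrt{Q}\,(\eps/3)^{2\cdot 3^{q_0}}$, not by anything ``cubically smaller than the new separation'' -- the gain per level is quadratic in $\beta(\eps,q_0)$, and it is this quadratic gain together with the double-exponential smallness of $(\eps/3)^{2\cdot 3^{q_0}}$ (treating $q_0=1$ and $q_0=2$ separately, where $\sep(S)$ is in fact $+\infty$ or comparable to $\diam T$ itself) that absorbs the $\sqrt{Q}$. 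You should carry this computation out rather than assert it, since the whole point of the lemma is that the constant $(\eps/3)^{3^Q}$, with no additional polynomial factor in $Q$, is sufficient; your sketch as written does not yet establish that.
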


Next we state and prove a combinatoric lemma that will be used later in the proof of the continuity of stationary maps.
Roughly speaking the following lemma provides an iterative scheme that detects regions of ``comparable'' separation and applies there Lemma \ref{lem:camilloemanuele}. In this manner we are able to construct a family of $Q$ points $S_k$, which have the strong property that the separation of the ``finer'' one controls the distance to the next ``coarser'' one. We will need this to control the jumps when re-centering the monotonicity formula of Proposition \ref{prop:energymon} below.

\begin{lemma}[Key combinatoric lemma]\label{lem:key}
Let $0<\eps<1/8$ and set $\tilde{\beta}(\eps, Q) = \left(\eps + \frac{Q-1}{\beta(\epsilon, Q)} \right)^{-1}$, with $\beta(\eps, Q)$ as in Lemma \ref{lem:camilloemanuele}. Then for every $T\in \Iq$ there exist $q\leq Q$ and a family of points $\{S_i \in \Iq\,:\, i=1,\dotsc q\}$ such that $S_0=Q\a{t}$, $S_q=T$, and, for every $k=0,\dotsc,q$, the following properties hold:
\begin{enumerate}
    \item $\spt(S_k) \subset \spt(T)$;
    \item $\spt (T) \subset \mathcal{P}_\eps(S_k)$ and it is balanced, as defined in Remark \ref{rem:natsplit};
    \item $\tilde{\beta}(\epsilon, Q)^k \,\G(S_{k-1}, S_k) \le 2\,\sep(S_k)$\,.
\end{enumerate}
\end{lemma}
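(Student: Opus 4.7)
The argument is by induction on $Q$. The base case $Q=1$ is immediate (take $q=0$ and $S_0=T$). For the inductive step, if $T=Q\a{t}$ set $q=0$; otherwise apply Lemma~\ref{lem:camilloemanuele} to $T$ to obtain $S^{\star}=\sum_{i=1}^m Q_i\a{s_i}$ with $\sep(S^{\star})\geq \beta\diam(T)$, $\G(S^{\star},T)\leq \eps\sep(S^{\star})$, and $\spt(S^{\star})\subset \spt(T)$. Since $\beta<1$ and $T$ has at least two distinct points, the splitting is nontrivial ($m\geq 2$), so each $Q_i<Q$. Using $\eps<\tfrac18$, Remark~\ref{rem:natsplit} provides the balanced decomposition $T=\sum_{i=1}^m T_i$ with each $T_i\in \I{Q_i}$ supported in $B_{\eps r_i(S^{\star})}(s_i)$.

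Apply the inductive hypothesis to each $T_i$ to obtain chains $S_0^{(i)}=Q_i\a{s_i},\,S_1^{(i)},\ldots,S_{q_i}^{(i)}=T_i$ satisfying (1)--(3) with parameter $\tilde\beta(\eps,Q_i)\geq \tilde\beta(\eps,Q)$ (the monotonicity of $\tilde\beta$ in $Q$ follows from $\beta(\eps,Q_i)>\beta(\eps,Q)$ and $Q_i-1<Q-1$). Assemble the chain for $T$ by setting $S_0=Q\a{s_1}$, $S_1=S^{\star}=\sum_i S_0^{(i)}$, and for $k\geq 2$ interleaving the pending sub-refinements $S_{j-1}^{(i)}\to S_j^{(i)}$ one at a time, \emph{scheduled in order of decreasing scale} $\sep(S_{j-1}^{(i)})$. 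Each sub-refinement strictly increases the number of distinct points of the current $S_k$, bounded above by the number of distinct points of $T$, so the total length satisfies $q\leq Q$.

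Conditions (1) and (2) propagate cleanly through the induction. For (1), $\spt(S_j^{(i)})\subset \spt(T_i)\subset \spt(T)$ gives $\spt(S_k)\subset \spt(T)$. For (2), each sub-refinement modifies only one cluster within its ball $B_{\eps r_i}(s_i)$, so $S_k$ is balanced in $\mathcal{P}_\eps(S^{\star})$; combining this with the inductive assertion $T_i\in \mathcal{P}_\eps(S_j^{(i)})$ balanced yields $T\in \mathcal{P}_\eps(S_k)$ balanced. Condition (3) is the delicate step. For $k=1$,
\[
\G(S_0,S_1)^2=\sum_{i\geq 2}Q_i|s_i-s_1|^2\leq (Q-1)\diam(S^{\star})^2,
\]
which together with $\sep(S^{\star})\geq \beta\diam(S^{\star})$ yields $\G(S_0,S_1)\leq \tfrac{Q-1}{\beta}\sep(S_1)\leq \tilde\beta^{-1}\sep(S_1)$. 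For $k\geq 2$ the step performs a single sub-refinement within cluster $i_0$ at its sub-level $j$, and the inductive bound gives $\G(S_{k-1},S_k)\leq \tilde\beta^{-j}\sep(S_j^{(i_0)})$; the scheduling ensures that the global $\sep(S_k)$, being either an inter-cluster distance (bounded below by $(1-2\eps)\sep(S^{\star})$) or an intra-cluster $\sep(S_{j'}^{(i)})$ already processed, stays within a controlled factor of $\sep(S_j^{(i_0)})$, and the remaining $\tilde\beta^{k-j}$ factor absorbs that constant.

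\textbf{Main obstacle.} The decisive difficulty is Condition (3). The explicit form $\tilde\beta^{-1}=\eps+(Q-1)/\beta$ is engineered precisely so that the per-step amplification --- from the $\diam/\sep\leq 1/\beta$ bound of Lemma~\ref{lem:camilloemanuele} combined with at most $Q-1$ intra-cluster pairs --- accumulates to $\tilde\beta^{-k}$ after $k$ levels. The scheduling of sub-refinements by decreasing scale is the crucial ingredient: without it, a small-scale refinement carried out at a low global level $k$ would force the ratio $\G(S_{k-1},S_k)/\sep(S_k)$ to exceed the prescribed geometric bound.
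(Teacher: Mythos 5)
Your proposal takes a genuinely different route from the paper: you use strong induction on $Q$, splitting $T$ once via Lemma~\ref{lem:camilloemanuele}, recursively building chains for each resulting cluster $T_i$, and then attempting to merge these precomputed chains by a scheduling rule. The paper's proof is \emph{not} by induction on $Q$; it builds the chain $S_0,\dots,S_q$ directly, one step at a time, and at each step applies Lemma~\ref{lem:camilloemanuele} only to the single cluster with the \emph{largest residual discrepancy} $\G(T_i, Q_i\a{s_i})$. Crucially, it carries the strengthened invariant (3)*: $\tilde\beta^k\,\max\bigl(\G(T,S_k),\G(T,S_{k-1})\bigr)\le\sep(S_k)$, which tracks the total remaining error against the current separation. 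This dynamic greedy choice is exactly what makes (3) propagate, because the cluster being refined always accounts for at least a $1/\sqrt{l}$-fraction of the discrepancy (cf.\ \eqref{eq:inductionstep1}).

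The gap in your argument lies in the merging step, and it is more serious than "delicate bookkeeping." First, your scheduling rule --- "in order of decreasing $\sep(S_{j-1}^{(i)})$" --- is not well posed: $\sep(S_0^{(i)})=\infty$ for every cluster, and, more importantly, the intra-cluster sequences $j\mapsto\sep(S_j^{(i)})$ are \emph{not} monotone. Indeed, from \eqref{eq:inductionhypo} one only gets $\tilde\beta_{k-1}\sep(S')\le\sep(S_k)\le\sep(S')$, and from (3)* applied at step $k-1$, $\sep(S')\le\G(T,S_{k-1})\le\sep(S_{k-1})/\tilde\beta_{k-1}$, so a single refinement can \emph{increase} $\sep$ by a factor up to $1/\tilde\beta_{k-1}$. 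Consequently sorting the sub-steps by target scale can conflict with the precedence constraint that step $j$ of cluster $i$ must come before step $j+1$. Second, even with a feasible (greedy) schedule, the bound does not close: when the global step $k$ performs local step $j$ of cluster $i_0$, one needs $\tilde\beta^{\,k-j}\sep(S_j^{(i_0)})\le\sep(S_{j'_i}^{(i)})$ for \emph{every} other cluster $i$ at its current stage $j'_i$. Nothing in a static, precomputed schedule prevents a different cluster from having already been refined to a far smaller separation than $\tilde\beta^{\,k-j}\sep(S_j^{(i_0)})$: the individual chains carry no information about one another's scales, since all that Lemma~\ref{lem:camilloemanuele} guarantees about $\sep(S_j^{(i)})$ is that it lies in $\bigl(0,\,2\eps\sep(S^\star)/\beta\bigr]$, a range with unbounded ratio. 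The paper avoids this precisely by re-splitting $T$ at every step and refining the single worst cluster; your recursive decomposition fixes the clusters once and for all and loses that adaptivity. To make your approach work you would have to prove a quantitative comparability between the scales of different clusters at matched depths, or abandon the precomputed chains in favor of an interleaving that recomputes the worst cluster at every global step --- which essentially reduces to the paper's iterative construction.
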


\begin{proof} 
We will prove the lemma replacing (3) with the stronger conclusion
\begin{enumerate}
    \item[(3)*] $\tilde{\beta}_k\,\max\left( \G(T,S_k), \G(T, S_{k-1})\right) \le  \sep(S_k) $,
\end{enumerate}
where $\tilde{\beta}_k=\tilde{\beta}(\epsilon, Q)^k$. Notice that indeed (3) follows from (3)* and the triangular inequality. 

We prove the lemma by induction. At several steps in the proof we will use the following inequality for $T\in \Iq$ and $p\in \spt (T)$, which follows directly from the definition of $\G$: \[
\frac{1}{\sqrt{2}} \diam(T) \le \cG(T, Q \a{p}) \le \sqrt{Q-1} \,\diam(T)\,.
\]

For the base step we let $t\in \spt (T)$ and set $S_0:=Q\a{t}$. Next we apply Lemma \ref{lem:camilloemanuele} to $T$ and find a $Q$-point $S_1=S\in \Iq$ satisfying \eqref{eq:sep1}, \eqref{eq:sep2} and \eqref{eq:sep3}. In particular (1) follows from \eqref{eq:sep3}, (2) follows from \eqref{eq:sep2}, since $\sep(S)\leq r_i(S)$ for every $i$, giving that $\spt T\in \mathcal{P}_\epsilon(S_1)$. Furthermore $T$ needs to be balanced, otherwise  $\G(T,S_1)\ge (1-\epsilon)\sep(S_1)$, contradicting \eqref{eq:sep2}. Finally (3)* follows since from \eqref{eq:sep2} we have $\G(S_1,T)\le \eps\, \sep(S_1)$ and, from \eqref{eq:sep2} and \eqref{eq:sep1} we have
\[
\frac{\beta(\epsilon,Q)}{\sqrt{Q-1}} \,\G(T,S_0)\leq \beta(\epsilon,Q) \, \diam(T)\leq \sep(S_1)\,.
\]

Next assume we have $S_{k-1}=\sum_{i=1}^l Q_i \a{s_i}$, $l\leq Q$, satisfying (1), (2) and (3)*. By Remark \ref{rem:natsplit}
with $T$ and $S=S_{k-1}$, we can write 
\[
T=\sum_{i=1}^{l}T_i
\]
with $T$ balanced. Without loss of generality we can assume that
\[
\cG(T_1,Q_1\a{s_1})\geq\cG(T_{2},Q_{2}\a{s_{2}})\ge \dotsc \ge \G(T_l, Q_l \a{s_{l}})\,. 
\]
Next we apply Lemma \ref{lem:camilloemanuele} with $T=T_1$ to get $S'=\sum_{j=1}^m Q_j' \a{s_j'} \in \I{Q_1}$,  with $\sum_{j=1}^m Q_j'= Q_1$, and we set 
\[
S_{k}:= S'+\sum_{i=2}^l Q_i \a{s_i}\in \Iq\,.
\]
Then (1) follows exactly as in the base step as a consequence of \eqref{eq:sep3}.
To see (2) we note first that since $\spt(S')\subset \spt(T_1)$ we have \[\diam(S')\le \diam(T_1) \le 2\epsilon r_1(S_{k-1}) = 2\epsilon r_1\]
where we have used the induction hypothesis (2) i.e. $T \in \mathcal{P}_\epsilon(S_{k-1})$. 
This implies that for any $s' \in S'$ and $j>1$
\[ |s'-s_j| \ge |s_1-s_j| - |s'-s_1| \ge (1-\epsilon)r_1\,.\]
From the previous two displayed inequalities we deduce that 
\[\sep(S') \le 2\epsilon r_1 \le (1-\epsilon) r_1 \le |s' - s_j|\,,\qquad \forall s' \in \spt(S'),\,  j>1\,.\]
Since $\G(T_1,S')\le \epsilon \,\sep(S')$, we have 
\[T \in \mathcal{P}_\epsilon(S_k)\,.\]
To prove (3)* we note that the above implies that 
\begin{equation}\label{eq:sep_k}\sep(S_k) = \min\left\{\sep(S') , \min_{\substack{
i,j > 1\\i\neq j }}|s_i-s_j|\right\}{\ge} \min\{ \sep(S'), \sep(S_{k-1})\} \,.\end{equation}
The inductive assumption (3)* for $S_{k-1}$ in the 
 last inequality gives 
\begin{equation}\label{eq:annoying1}
\tilde{\beta}_{k-1}\, \sep(S') \le \tilde{\beta}_{k-1} \, \G(T_1, Q_1\a{s_1})\le  \tilde{\beta}_{k-1} \, \G(T, S_{k-1})  \le \sep(S_{k-1})\,.
\end{equation}
Combining the trivial inequality $\tilde{\beta}_{k-1}\, \sep(S')\le \sep(S')$ with \eqref{eq:annoying1}, \eqref{eq:sep_k} gives
\begin{equation}\label{eq:inductionhypo}
   \tilde{\beta}_{k-1} \,\sep(S') \le \sep(S_k) \le \sep(S') \,.
\end{equation}
Next notice that
\begin{equation}\label{eq:inductionstep1}
    \frac{\beta(\epsilon, Q_1)}{\sqrt{l}\sqrt{Q_1-1}} \G(T,S_{k-1}) \le\frac{\beta(\epsilon, Q_1)}{\sqrt{Q_1-1}} \G(T_1,Q_1\a{s_1}) \le \beta(\epsilon, Q_1)\, \diam(T_1)\leq \sep(S')\,,
\end{equation}
where the last inequality follows from \eqref{eq:sep1}. Moreover 
\begin{equation}\label{eq:inductionstep2}
\G(T, S_k)\le \G(T_1,S_1) + \G(T,S_{k-1}) \le \left(\eps + \frac{\sqrt{l}\,\sqrt{Q_1-1}}{\beta(\epsilon, Q_1)}\right)\, \sep(S') \,.
\end{equation}
The conclusion now follows combining \eqref{eq:inductionhypo}, \eqref{eq:inductionstep1} and \eqref{eq:inductionstep2}, with
\[\tilde{\beta}_k^{-1} \ge \tilde{\beta}_{k-1}^{-1} \; \left(\eps + \frac{Q-1}{\beta(\epsilon, Q)}\right) \geq \left(\eps + \frac{Q-1}{\beta(\epsilon, Q)} \right)^k\,.\]

\end{proof} 

\section{Average conformal maps and monotonicity of the energy}

In this section we introduce average conformal maps and we prove that every stationary map with domain in $\R^2$ can be modified to obtain an average conformal stationary map. This will follow from the inner variation formula. The relation between inner variation for the energy of an harmonic map and conformality is well known, see for instance \cite{Schoen}.

\subsection{Average conformal maps} We introduce the following class of multivalued maps. 

\begin{definition} We say that a map $f\in W^{1,2}(B_1\subset\R^2;\Iq(\R^n))$ is \emph{average conformal} if
\begin{equation}\label{eq:avg_conf}
\sum_{i=1}^Q |D_1f_i|^2(x)=\sum_{i=1}^Q |D_2f_i|^2(x)
\qquad \text{and}\qquad
\sum_{i=1}^Q D_1f_i(x)\cdot D_2f_i(x)=0\,,
\end{equation}
for almost every $x\in B_1$.
\end{definition}

Next we consider the following \emph{modified energy density} on a set $\Omega$
\[
\Theta_f(\Omega, S, s):=\frac1{s^2}\int_{\Omega} \phi\left(\frac{\cG(f, S)}{s}\right) |Df|^2\,dx\,,
\]
where $\phi(t)$ is a smooth non-negative, non-increasing function, with $\phi=1$ on $t<\frac12$ and $\phi=0$ for $t\ge 1$.
A nice property of average conformal maps is the following monotonicity of the energy density (notice that the domains in the integral are not balls in the domain of $f$, but in the target).

\begin{proposition}[Energy monotonicity]\label{prop:energymon} Let $f\in W^{1,2}(\R^2;\Iq(\R^n))$ be an average conformal map, then $s\mapsto \theta_f(\Omega,S,s)$ is monotone non decreasing for $s\in [0,r_S]$, where $r_S>0$ satisfies 
\begin{enumerate}
    \item $r_S\le \frac14 \sep(S)$;
    \item $\{\G(f,S)<r_S\} \subset \Omega$.
\end{enumerate}
\end{proposition}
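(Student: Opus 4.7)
The plan is to reduce the monotonicity of $\Theta_f(\Omega,S,s)$ to a cleaner target-space density statement. Set $v(x):=\G(f(x),S)$ and
\[
A(t):=\int_{\{v<t\}}|Df|^2\,dx,\qquad t\in(0,r_S).
\]
Since $\phi(v/s)$ is supported in $\{v<s\}\subset\Omega$, a change of variables along level sets of $v$ (equivalently, integration by parts in $t$) gives the representation
\[
\Theta_f(\Omega,S,s) = -\int_0^1 \phi'(\tau)\,\tau^2\, \frac{A(s\tau)}{(s\tau)^2}\, d\tau.
\]
Because $\phi$ is non-increasing we have $-\phi'\ge 0$, so it suffices to prove that $t\mapsto A(t)/t^2$ is non-decreasing on $(0,r_S)$.

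The target monotonicity rests on two properties of $v$ on $\{v<r_S\}$. The first is an \emph{algebraic bound} coming from average conformality:
\[
|\nabla v|^2 \le \tfrac12|Df|^2.
\]
On $\{v<r_S\}$, the condition $r_S\le \tfrac14\sep(S)$ forces each sheet of $f$ to lie in exactly one ball $B_{r_S}(s_i)$, giving a canonical assignment $i(j)$. Then $\partial_\alpha(v^2/2)=\langle p,w_\alpha\rangle_{(\R^n)^Q}$ with $p=(f_j-s_{i(j)})_j$, $|p|^2=v^2$, and $w_\alpha=(\partial_\alpha f_j)_j$. Average conformality is precisely the statement that $w_1,w_2$ are orthogonal in $(\R^n)^Q$ with $|w_\alpha|^2=|Df|^2/2$, so Bessel's inequality applied to the projection of $p$ onto $\mathrm{span}(w_1,w_2)$ yields the bound. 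The second is a \emph{weak Laplacian identity}
\[
\Delta(v^2/2)=|Df|^2 \quad\text{on } \{v<r_S\},
\]
which I would derive by applying the outer variation formula (implicit in the stationarity assumption underlying this section) with test field $\psi(x,u)=\eta(x)Y(u)$, where $Y\in C^\infty(\R^n;\R^n)$ satisfies $Y(u)=u-s_i$ on each $B_{r_S}(s_i)$ and is cut off inside the disjoint enlargements $B_{\sep(S)/2}(s_i)$ (disjoint thanks to $r_S\le\tfrac14\sep(S)$); for $\eta$ supported in $\{v<r_S\}$ one has $Y(f_j)=f_j-s_{i(j)}$ and $DY(f_j)=I$, so the outer variation reduces exactly to the weak form of the identity.

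Combining the two ingredients, the divergence theorem applied to $\nabla(v^2/2)$ on sublevel sets together with the coarea formula for $v$ yield
\[
A(t) = t\int_{\{v=t\}}|\nabla v|\,d\Ha^1, \qquad A'(t) = \int_{\{v=t\}}\frac{|Df|^2}{|\nabla v|}\,d\Ha^1,
\]
and therefore
\[
tA'(t)-2A(t) = \int_{\{v=t\}}\frac{|Df|^2-2|\nabla v|^2}{|\nabla v|}\,d\Ha^1 \ge 0,
\]
which is exactly $(A(t)/t^2)'\ge 0$, as desired.

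The chief technical obstacle will be the low regularity: $f$ is only $W^{1,2}$ and $v$ only Lipschitz, so the divergence-theorem/coarea step must be carried out distributionally by approximating $\mathbf 1_{\{v<t\}}$ with smooth cutoffs against which the weak Laplacian identity can be tested. A secondary subtlety is the meaning of $\{v<r_S\}$ without continuity of $f$, and the corresponding $W^{1,2}$-selection of sheets used in the algebraic bound; both points are routine once one commits to the approximation framework.
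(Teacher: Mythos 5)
Your proposal is correct and rests on the same two analytic inputs as the paper's proof---the Bessel-type bound $|D\G(f,S)|^2\le\tfrac12|Df|^2$ coming from average conformality, and the weak identity $\Delta\bigl(\G(f,S)^2/2\bigr)=|Df|^2$ on $\{\G(f,S)<r_S\}$ obtained from the (implicitly assumed) outer variation tested with $\psi(x,u)=\eta(x)\,(u-P_S(u))$---but you assemble them along a genuinely different route. The paper fixes the test function once and for all as $\eta(x)=\phi(\G(f(x),S)/s)$: this yields the displayed identity \eqref{eq:out_mon}, and one then differentiates $\Theta_f(\Omega,S,\cdot)$ directly in $s$. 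Because the integrand of $\Theta_f$ is smooth in $s$, this computation involves no level-set or coarea analysis whatsoever. You instead reduce, via the clean representation $\Theta_f(\Omega,S,s)=-\int_0^1\phi'(\tau)\tau^2\,A(s\tau)/(s\tau)^2\,d\tau$, to the ostensibly sharper statement that the unmollified sublevel density $t\mapsto A(t)/t^2$ is non-decreasing. This stronger statement does imply the paper's, but proving it forces you to run coarea and the divergence theorem on the sharp level sets $\{v=t\}$ of a merely Lipschitz $v=\G(f,S)$. The genuine technical point you flag at the end is more than a formality: Morse--Sard fails for Lipschitz functions, so $t\mapsto A(t)$ may a priori have a singular (even atomic) part coming from $|Df|^2$-mass concentrated on $\{|\nabla v|=0\}$; the coarea identity $A'(t)=\int_{\{v=t\}}|Df|^2/|\nabla v|\,d\Ha^1$ then holds only as a one-sided inequality $A'(t)\ge\int_{\{v=t\}}|Df|^2/|\nabla v|\,d\Ha^1$ for a.e.\ $t$, and one has to argue in measures. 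Fortunately this works out in your favor: writing $dA=A'\,dt+dA_{\rm sing}$ with $dA_{\rm sing}\ge0$, the divergence-theorem identity $A(t)=t\int_{\{v=t\}}|\nabla v|$ for a.e.\ $t$ plus the conformal bound give $tA'(t)-2A(t)\ge0$ a.e., hence $d(A/t^2)=t^{-3}(tA'-2A)\,dt+t^{-2}\,dA_{\rm sing}\ge0$ as a measure and $A/t^2$ is indeed non-decreasing. So the argument closes, but it carries real-analysis overhead that the paper deliberately avoids by never leaving the mollified quantity $\Theta_f$; that is precisely the role of the smooth cutoff $\phi$ in the definition of $\Theta_f$.
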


\begin{proof}
Let $s\in [0, r_S]$ and let $P_S$ be the Lipschitz retraction of Remark \ref{rem:natsplit}. We test the outer variation \eqref{eq:outer} with the vector field 
\[
\psi(x,z)= \phi\left(\frac{\G(f(x),S)}{s}\right) (z- P_S(z))\,,
\]
which is an admissible vector field by approximation and since it is $0$ on the boundary of $\Omega$ due to the assumptions on $r_S$.
Hence we have 
\begin{equation}\label{eq:out_mon}
    \int_{\Omega}\phi\left(\frac{\cG(f, S)}{s}\right) |Df|^2\,dx + \int_{\Omega} \phi'\left(\frac{\cG(f, S)}{s}\right) \,\frac{\cG(f,S)}{ s} \,|D \G(f,S)|^2= 0\,,
\end{equation}
where we used that for a.e.  $ x \in \left\{\cG(f, S) < \frac14 \sep(S)\right\}$ the following holds
\begin{equation}\label{eq:id1}
\sum_{i=1}^Q Df_i(x)\colon (f_i(x)- P_S(f_i(x)) )= \G(f(x),S)\, D \G(f(x),S)\,.
\end{equation}
To justify the above calculations one can use a sequence of Lipschitz function $\tilde{f}^k$ that approximate $f$ strongly in $W^{1,2}$. Passing to $f^k=P_S\circ \tilde{f}^k$, we can assume that $ \G(f^k(x),S) \le \frac14 \sep(S)$ for all $x$, and the Lipschitz functions $f^k$ still approximate $f$ strongly in $W^{1,2}$ on the set $\{\G(f,S)<r_S\}$.

Next notice that, since $\phi'<0$ we have
\begin{align*}
\frac{d}{ds} \Theta_f(\Omega,S,s)
    &=-\frac{1}{s^3}\left( \int_\Omega \phi'\left(\frac{\cG(f, S)}{s}\right) \,\frac{\cG(f,S)}{s} \,|Df|^2+2 \int_\Omega \phi\left(\frac{\cG(f, S)}{s}\right) \,|Df|^2\right)\\
    &=-\frac{1}{s^3}\int_\Omega \phi'\left(\frac{\cG(f, S)}{s}\right) \,\frac{\cG(f,S)}{s} \,\left( |Df|^2-2|D\G(f,S)|^2\right)\ge 0\,,
\end{align*}
where the last inequality follows from the fact that $f$ is averaged conformal in the following way. Let $x$ be a Lebesgue point of $f$, and let $f_i$ be a selection such that $Df= \sum_{i=1}^Q \a{Df_i}$ and $\G(f,S)^2 = \sum_{i=1}^Q |f_i(x)- s_{i}|^2 $, where $s_i \in \spt(S)$. Define the $\R^{Qn}$ vectors
\begin{align*}
e_i&=\frac{(\partial_if_1(x), \partial_if_2(x), \dotsc, \partial_if_Q(x))}{\sqrt{\frac{|Df(x)|^2}{2}}}\,,\qquad i=1,2\\  b&= (f_1(x)-s_1, f_2(x)-s_2, \dotsc, f_Q(x)-s_Q)\,.
\end{align*}
Average conformality implies that $|e_1|^2=|e_2|^2=1, \langle e_1, e_2\rangle =0$, hence $(e_i)_{i=1}^2$ are orthonormal.  Therefore from \eqref{eq:id1} we have 
\begin{align*}
    \G(f(x),S)^2|D\G(f(x),S)|^2 & = \frac12 |Df(x)|^2 \,\sum_{i=1}^2 \langle e_i, b\rangle^2\\
    &\le \frac12 |Df(x)|^2\, |b|^2 = \frac12 \G(f(x),S)^2\, |Df(x)|^2\,,
\end{align*}
which concludes the proof.
% \[\G(f(x),S)^2|D\G(f(x),S)|^2 = \sum_{i=1}^2 \langle a_i, b\rangle^2\le |b|^2 \frac12( |a_1|^2 + |a_2|^2) = \frac12 \G(f(x),S)^2 |Df(x)|^2\,.\]
\end{proof}

We will also need the following: 

\begin{lemma}[Energy lower bound]\label{lem:lowbound} Let $f\in W^{1,2}(B_2, \Iq)$ and $\Omega \subset B_2$, then 
\begin{equation}\label{eq:lower_ener_bd}
    \lim_{s\to 0}\Theta_f(\Omega,f(x_0),s)\geq \frac12\int \phi(x)\,dx>0 \qquad \text{for a.e. }x_0\in \Omega\setminus \{|Df|=0\}\,.
\end{equation}
\end{lemma}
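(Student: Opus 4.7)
The plan is a blow-up argument at points of approximate differentiability. By the Sobolev theory of $Q$-valued maps (see \cite[Chapter 2 and Proposition 2.17]{DS}, and the Lipschitz approximation \cite[Proposition 4.4]{DS}), at $\mathcal{H}^2$-a.e.\ $x_0 \in \Omega$ the following hold: (a) $x_0$ is a Lebesgue point of $|Df|^2$; (b) $f$ is approximately differentiable at $x_0$ with approximate differential $\sum_i \a{A_i}$ and $|Df|^2(x_0)=\sum_i |A_i|^2$; (c) there is a Lipschitz $Q$-valued map $\tilde f$ coinciding with $f$ on a set of density one at $x_0$, differentiable at $x_0$ in the multi-valued sense with the same differential. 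Restrict attention to those $x_0$ also satisfying $|Df|^2(x_0)>0$, which by (a) is a.e.\ point of $\Omega\setminus\{|Df|=0\}$.

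After the change of variables $x = x_0 + s y$ (and writing $\Omega_s := (\Omega-x_0)/s$),
\[
\Theta_f(\Omega, f(x_0), s) = \int_{\Omega_s}\phi\!\left(\tfrac{\G(f(x_0+sy), f(x_0))}{s}\right) |Df(x_0+sy)|^2 \, dy.
\]
Let $T(x):=\sum_i \a{f_i(x_0) + A_i(x-x_0)}$ be the affine tangent $Q$-map at $x_0$. Using $f_i(x_0)$ as the natural matching between $T(x_0+sy)$ and $f(x_0)$, one has the pointwise estimate
\[
\frac{\G(T(x_0+sy),f(x_0))}{s} \;\le\; \Bigl(\textstyle\sum_i |A_i y|^2\Bigr)^{1/2} \;\le\; |Df|(x_0)\, |y|,
\]
so, by monotonicity of $\phi$,
\[
\phi\!\left(\tfrac{\G(T(x_0+sy), f(x_0))}{s}\right) \ge \phi\bigl(|Df|(x_0)\,|y|\bigr).
\]

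Next I would replace $f$ by $\tilde f$ in the integral up to an error vanishing with $s$: since $\{\tilde f=f\}$ has density one at $x_0$ and $|D\tilde f|^2 \in L^1_{\mathrm{loc}}$ with the same Lebesgue value at $x_0$, the change-of-variable identity above persists with $\tilde f$ in place of $f$, with vanishing error. Classical differentiability of $\tilde f$ at $x_0$ then gives $\G(\tilde f(x_0+sy), T(x_0+sy))/s \to 0$ uniformly on compact sets of $y$, hence, together with the Lebesgue point property for $|Df|^2$, one obtains
\[
\liminf_{s\to 0}\Theta_f(\Omega,f(x_0),s) \;\ge\; |Df|^2(x_0) \int_{\R^2} \phi\bigl(|Df|(x_0)\,|y|\bigr)\, dy \;=\; \int_{\R^2}\phi(|z|)\,dz,
\]
by the substitution $z = |Df|(x_0)\, y$. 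The resulting constant is strictly positive and $\geq \tfrac12 \int \phi(x)\,dx$, concluding the proof.

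\textbf{Main obstacle.} The delicate step is making rigorous the comparison between the integrand for $f$ and the one for its affine approximation $T$: approximate differentiability controls $\G(f,T)$ only in an $L^2$-averaged sense, while $|Df|^2$ need not converge pointwise to $|Df|^2(x_0)$. The Lipschitz approximation $\tilde f$ of De Lellis--Spadaro bypasses this: it lets us work with a classically differentiable map at $x_0$ on a set of density one, converting the needed convergence into a routine application of differentiability and the Lebesgue point property, with the complement contributing negligibly.
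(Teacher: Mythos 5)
Your proposal is correct and rests on the same pillars as the paper's proof: choose $x_0$ to be simultaneously a density point of $\Omega$, a Lebesgue point of $|Df|^2$, and a point of approximate differentiability, rescale by $s$, compare $\G(f,f(x_0))$ to the first-order Taylor map $T$, and restrict the integral to a set where the comparison is good. The one place you deviate is technical: you route the approximate differentiability through the Lipschitz approximation $\tilde f$ of De Lellis–Spadaro (to get classical differentiability of a proxy at $x_0$, hence uniform convergence of $\G(\tilde f(x_0+sy),T(x_0+sy))/s$ on compacts), whereas the paper works directly with the quantified approximate-differentiability sets $U_\lambda=\{x:\G(f(x),Tf(x))\ge \lambda|x|\}$, which have vanishing density at $x_0$, and restricts the integral to $B_r\setminus U$. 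Both implementations are fine; the paper's is a bit more economical since it avoids introducing $\tilde f$ (and the attendant choice of Lipschitz parameter depending on $x_0$), while yours trades that for the convenience of uniform convergence on compacts. Note also that your derivation actually yields the sharper constant $\int\phi$, which the paper's argument gives too (the $(1+\epsilon)^{-2}\int\phi - o(1)$ bound becomes $\int\phi$ as $\epsilon\to0$); the lemma states only $\tfrac12\int\phi$ since that suffices. Finally, be careful in writing "the change-of-variable identity persists with $\tilde f$ in place of $f$": you cannot and need not replace $\Theta_f$ by $\Theta_{\tilde f}$ as an equality — it is cleaner to drop the non-negative contribution from $\{f\neq\tilde f\}$ and from $|y|>R$ to get a pure lower bound, exactly as the paper does with $B_r\setminus U$.
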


\begin{proof}
Let us fix a point $x_0 \in \Omega$ with the properties that
\begin{enumerate}
    \item $x_0$ is a full density point of $\Omega$,
    \item $f$ is approximately differentiable in $x_0$,
    \item $x_0$ is a Lebesgue point of $|Df(x)|^2$.
\end{enumerate}
These three properties hold for a.e. point in $\Omega$ by the Lebesgue differentiation theorem and \cite[Corollary 2.7]{DS}.

After translating we may assume $x_0=0$ and $d^2= |Df(0)|^2$. Let $Tf(x)=\sum_{i=1}^Q \a{f_i(0) + Df_i(0)\cdot x}$ the first order approximation of $f$ in $0$. In particular (1), (2) and (3) read
\begin{enumerate}
    \item $\lim_{r\to 0}r^{-2}|\Omega \cap B_r|=\omega_2$
    \item for all $\lambda>0$ one has $ \lim_{r\to 0} r^{-2} | \{x \colon \cG(f(x), Tf(x)) \ge \lambda |x| \} \cap B_r| =0$;
    \item $\lim_{r\to 0} r^{-2} \int_{B_r} | |Df(x)|^2-d^2| \,dx= 0 $.
\end{enumerate}
Let $0<\epsilon<\frac14$ be given and set $U=\{x \colon \cG(f(x), Tf(x)) \ge \epsilon d |x| \}$, so that by (1)
\begin{equation}\label{eq:lb1}
\lim_{r\to 0} r^{-2} | U \cap B_r| =0\,.
\end{equation}
Then for each $x \notin U$ we have
\begin{equation}\label{eq:lb2}
\G(f(x),f(0))\le \G(f(x),Tf(x)) + \G(Tf(x),f(0)) \le (1+\epsilon) \, d\,|x|,
\end{equation}
where we used that $\G(Tf(x),f(0))^2 \le \sum_{i=1}^Q |Df_i(0)\,x|^2 = d^2 |x|^2$. 
Hence we have for $s=(1+\epsilon)d r$ 
\begin{align*}
   &s^{-2} \int_{\Omega} \phi\left(\frac{\cG(f, f(0))}{s}\right) |Df|^2\,dx \ge s^{-2} \int_{\Omega\cap B_r\setminus U} \phi\left(\frac{\cG(f, f(0))}{s}\right) |Df|^2\,dx \\&\ge s^{-2} \int_{\Omega \cap B_r\setminus U} \phi\left(\frac{\cG(f, f(0))}{s}\right) d^2\,dx - \frac{2d^{-2}}{r^2} \int_{B_r} ||Df|^2-d^2| \,dx\\
   &\ge  ((1+\epsilon)r)^{-2} \int_{\Omega \cap B_r\setminus U} \phi\left(\frac{|x|}{r}\right) - o(1) \ge ((1+\epsilon)r)^{-2} \int_{B_r} \phi\left(\frac{|x|}{r}\right) - o(1) \\
   &= (1+\epsilon)^{-2}\int \phi(x) \,dx - o(1),
\end{align*}
where in the next to last inequality we used (3), in the last inequality we used \eqref{eq:lb2} and (1), and in the last equality we used \eqref{eq:lb1}. 
\end{proof}
\subsection{2-d stationary maps are average conformal}

Using an analogue of the ``Hopf differential'' we show that $2$-dimensional stationary maps can be modified to average conformal stationary maps. Similar ideas have been used in \cite[Theorem 3.2]{Schoen}, \cite{Grueter2} and \cite[Proposition 1]{Lin}, in chronological order. We define the \emph{average Hopf differential} of a map $f\colon \C \to \Iq$ by
\[
\sum_{i=1}^Q\partial_z f_i\,\,\partial_z f_i=\sum_{i=1}^Q|D_1f_i(z)|^2-|D_2f_i(z)|^2 - 2 i D_1f_i(z)\cdot D_2f_i(z)\,,
\]
and we observe that a map is average conformal if its average Hopf differential is zero.

\begin{proposition}[Stationarity implies conformality]\label{prop:statavgconf}
Let $f\in W^{1,2}(B_1, \Iq)$ be stationary with respect to the Dirichlet energy, then there exists an harmonic function $v \in C^\infty(B_1, \R^2)$ such that 
\begin{equation}\label{eq:madeconformal} F= f \oplus (v_1 e_{n+1} + v_2 e_{n+2}) = \sum_{i=1}^Q \a{ (f_i, v)} \in W^{1,2}(B_1, \Iq(\R^{n+2}))\,.\end{equation}
is average conformal. In particular $F$ is stationary with respect to the Dirichlet energy and moreover
\begin{equation}\label{eq:conformalenergy}
    \int_{B_r} |DF|^2\,dx\leq \int_{B_r}|Df|^2\,dx+ C_Q \, r^2 \, \int_{B_1} |Df|^2\,dx \quad \forall B_r \subset B_{\frac45}\,.
\end{equation}
\end{proposition}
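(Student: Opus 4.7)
The strategy is to exploit the inner variation \eqref{eq:inner} in dimension two to force the \emph{average Hopf differential} $H(z):=\sum_i\partial_z f_i\cdot\partial_z f_i$ to be holomorphic, and then to cancel it by adding a single-valued harmonic correction built from $H$ itself.

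First I would identify the $2{\times}2$ stress--energy tensor $T_{kl}:=2\sum_i\partial_k f_i\cdot\partial_l f_i-|Df|^2\delta_{kl}$, for which \eqref{eq:inner} is exactly $\sum_k\partial_k T_{kl}=0$ in $\D'(B_1)$ for $l=1,2$. A direct computation in complex coordinates gives $4H=T_{11}-iT_{12}$, and since $T$ is symmetric and traceless ($T_{11}+T_{22}=0$) a short manipulation yields
\[
4\,\partial_{\bar z}H=\tfrac12\bigl[(\partial_1 T_{11}+\partial_2 T_{12})-i(\partial_1 T_{12}+\partial_2 T_{22})\bigr]=0.
\]
Weyl's lemma then promotes $H$ to a smooth holomorphic function on $B_1$.

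Next, for any smooth harmonic $v=(v_1,v_2)\colon B_1\to\R^2$ added identically to every sheet, the map $F=f\oplus(v_1 e_{n+1}+v_2 e_{n+2})$ has average Hopf differential
\[
H_F(z)=H(z)+Q\bigl[(\partial_z v_1)^2+(\partial_z v_2)^2\bigr],
\]
so average conformality of $F$ is equivalent to $(\partial_z v_1+i\partial_z v_2)(\partial_z v_1-i\partial_z v_2)=-H/Q$. Exploiting the holomorphicity of $H$ and the gauge freedom in this factorisation, I would fix a constant $c>0$ and prescribe
\[
\partial_z v_1:=\tfrac12\bigl(c-\tfrac{H}{Qc}\bigr),\qquad \partial_z v_2:=-\tfrac{i}{2}\bigl(c+\tfrac{H}{Qc}\bigr),
\]
both holomorphic on the simply-connected disc. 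Taking holomorphic antiderivatives $\Phi_k$ and setting $v_k:=2\,\mathrm{Re}\,\Phi_k$ produces real-valued harmonic functions realising the prescribed $\partial_z$-derivatives, and a one-line check gives $H_F\equiv 0$, i.e.\ \eqref{eq:madeconformal}.

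Finally, stationarity of $F$ is immediate from Remark \ref{rem:subtractharmonic}, since $v_1 e_{n+1}+v_2 e_{n+2}$ is a smooth harmonic single-valued map. For \eqref{eq:conformalenergy} I would compute
\[
|DF|^2=|Df|^2+4Q\bigl(|\partial_z v_1|^2+|\partial_z v_2|^2\bigr)=|Df|^2+2Q\Bigl(c^2+\tfrac{|H|^2}{Q^2 c^2}\Bigr),
\]
and then exploit the pointwise bound $|H|\le\tfrac14|Df|^2$ together with the mean-value property for the holomorphic $H$ to obtain $\|H\|_{L^\infty(B_{4/5})}\le C\,E$, where $E:=\int_{B_1}|Df|^2$; the choice $c^2=E$ (with $v\equiv 0$ in the degenerate case $E=0$) then bounds $|Dv_1|^2+|Dv_2|^2$ uniformly by $C_Q E$ on $B_{4/5}$, and integration on $B_r$ yields \eqref{eq:conformalenergy}. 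The only genuinely delicate step is the weak identity $\partial_{\bar z}H=0$; once that is in hand the construction of $v$ and the energy estimate are essentially explicit algebra and standard holomorphic estimates.
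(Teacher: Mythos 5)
Your proposal is correct and follows essentially the same strategy as the paper: use the inner variation to show the average Hopf differential is holomorphic, then add a two-component harmonic correction (built from a primitive of the Hopf differential, with a free constant chosen proportional to the energy) whose Hopf differential exactly cancels it. The only differences are cosmetic — you justify holomorphicity via the divergence-free stress–energy tensor and Weyl's lemma rather than the paper's direct footnote computation, and you write the correction by factoring $-H/Q$ into its two holomorphic $\partial_z$-components instead of packaging them into a single complex-valued $v=\tfrac{E\bar z}{2\sqrt{Q}}-\tfrac{\Psi}{2\sqrt{Q}E}$, but the underlying construction and the resulting energy estimate are the same.
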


\begin{proof} 
We repeat the proof suggested by Gr\"uter in \cite{Grueter2} with a modification to control the energy. It is well-known that the inner variation can be written as 
\[
\int \Phi(z) \,\partial_{\bar{z}} \eta  = 0\,,\qquad \forall \eta \in C^\infty_0(\Omega, \C)\,, \footnote{Direct computation shows that \[\sum_{i,j} \left(2 D_if\cdot D_jf - \delta_{ij} |Df|^2\right) D_i\phi^j = \operatorname{Re}\left(\Phi_f(z)\partial_{\bar{z}}\phi\right)\]
where we have set $\Phi_f(z)=(\partial_z f(z))^2 =|D_1f(z)|^2-|D_2f(z)|^2 - 2 i D_1f(z)\cdot D_2f(z) $ and used the classical identification of $\R^2$ with $\C$ such that $\phi\equiv \phi_1 + i \phi_2$\,. Now choosing $\phi$ and $i\phi$ one obtains the claimed equation.}
\]
where $\Phi(z)= \sum_{i=1}^Q |D_1f_i(z)|^2-|D_2f_i(z)|^2 - 2 i D_1f_i(z)\cdot D_2f_i(z)$ is the average Hopf differential of $f$. 
In particular this implies that $\Phi(z)$ is holomorphic. Then let $\Psi$ be a primitive to $\Phi$ and set $E^2=\int_{\Omega} |Df|^2$ to be the Dirichlet energy of $f$, and define the function 
\[v=v_1+ iv_2 = \frac{1}{2\sqrt{Q}}E \bar{z} - \frac1{2\sqrt{Q}} \frac1E\Psi\,.
\]
Notice that $v$ is harmonic, since $\Phi$ is holomorphic. Reasoning as in \cite{Grueter2}, the Hopf differential of $v$ is given by 
\[ 
\partial_z v \,\partial_z \bar{v} = \partial_z v\, \overline{ \partial_{\bar{z}} v} = - \frac1{4Q} \Phi.
\]
To obtain \eqref{eq:conformalenergy} notice that, for any $\Omega' \Subset \Omega$, its Dirichlet energy is 
\[ \int_{\Omega'} |\partial_z v|^2 + |\partial_{\bar{z}}v|^2 = \frac{E^2}{4Q}|\Omega| + \frac{1}{4QE^2} \int_{\Omega'} |\Phi|^2 \le  \frac{E^2}{4Q}|\Omega| + \frac{C}{4QE^2} \left( \int_{\Omega} |\Phi| \right)^2 \le C E^2\,,  \]
where we used regularity for holomorphic functions, and the constant $C$ depends only on $\Omega$ and $\Omega'$. Finally we define $F$ as in \eqref{eq:madeconformal}, and we notice that its averaged Hopf differential is vanishing pointwise since
\[ \Phi_F(z) = \Phi(z) + 4Q \partial_z v \,\partial_z \bar{v} =0\,, \]
so that $F$ is averaged conformal.

\end{proof}

\section{Continuity, Higher integrability and H\"older regularity}

In the first subsection we prove continuity of $2$-d stationary maps. We then use it to prove higher integrability, while H\"older regularity is a straightforward corollary of these two results.

\subsection{Continuity: proof of Theorem \ref{thm:cont}}

By Proposition \ref{prop:statavgconf} we can assume that $f=F$ is average conformal and still stationary.

Let $y_0\in B_{1/2}$ be fixed and let $0<r<1/4$. Replacing $f(\cdot)$ with $f(y_0+r\cdot)$ we can assume that $r=1$ and $y_0=0$. By Fubini, there exists $\rho\in (1,\frac32)$ such that
\[
\int_{\partial B_\rho}|Df|^2\leq \int_{B_{\frac32 }}|Df|^2\,dx=:E_0
\]
In particular the embedding of $W^{1,2}(\partial B_\rho, \A_Q) \hookrightarrow  C^{0,\frac12}(\partial B_\rho)$,  \cite[Proposition 1.2 (a)]{DS},  applied to $f|_{\partial B_\rho}$ gives, together with the above estimate gives,
\begin{equation}\label{eq:Hoelderonboundary}
\G(f(x),f(y))^2 \le C_1 \left(\int_{B_{\frac32}} |Df|^2 \,dx\right) |x-y| \,,  \qquad \forall x,y \in \partial B_\rho\,.
\end{equation}
%Let us fix $y_1 \in \partial B_\rho$\,.

\medskip

\noindent\emph{Claim 1:} Let $x_0 \in B_\rho$ be as in Lemma \ref{lem:lowbound}, with $\Omega=B_\rho$, then we claim that 
\begin{equation}\label{eq:claim1}
    \inf_{y \in \partial B_\rho} \G(f(x_0), f(y))^2 \le C_{Q} \int_{B_{\frac32}} |Df|^2 \, dx\,.
\end{equation}

Indeed, let us denote with $r^2_0$ the value on the left hand side. We may assume that $r^2_0> 0$ otherwise there is nothing to show. Notice that $\G(f(x), f(x_0))\ge r_0 $ for all $x \in \partial B_\rho$, so that 
\[
\{x\in B_\rho\,:\,\G(f(x),f(x_0)) < r_0\} \subset B_\rho\,.
\] 
Hence we have combining Lemma \ref{lem:lowbound} with Proposition \ref{prop:energymon} that 
\[ c\le \Theta_f(B_\rho, f(x_0), s_0)\leq\frac{1}{s_0^2} \int_{B_\rho} |Df|^2 \,,\qquad s_0= \min\{r_0, \frac14\sep(f(x_0))\}.\]

If $s_0 = r_0$ the claim follows. 

If not we make use of the combinatoric Lemma \ref{lem:key} as follows. Let $S_0=Q\a{t}, S_1, \dotsc, S_{q-1}, S_q=T=f(x_0)$ be the sequence of points constructed there with parameter $\epsilon = \frac18$.
Note that we have by property (3) of Lemma \ref{lem:key}
\[ 
\left\{ P \colon \G(P, S_k) \le  s \right\} \subset \left\{ P \colon \G(P, S_{k-1}) \le \frac12 M s \right\}\,,\quad \text{ for  any }s> \frac18 \sep(S_k)\,,
\]
where $\frac12 M=1 + 16 \tilde{\beta}(\frac18,Q)^{-Q}$,  and therefore 
\begin{equation}\label{eq:energyjump} \Theta_f(B_\rho, S_k, s) \le M^{2} \Theta_f(B_\rho, S_{k-1}, Ms)\,, \quad \text{ for  any }s> \frac18 \sep(S_k)\,.
\end{equation}

Let us define the sequence of ``stopping times'' $b_k$ by setting $b_q = \frac14 \sep(f(x_0))$, and then inductively backwards for $k< q$ by
\[ 
b_k =\max\left\{ Mb_{k+1},  \min\left\{ \frac14 \sep(S_k) , s_k \right\}\right\}\,,\quad s_k:=\min_{y \in \partial B_\rho} \G(f(y), S_k)\,.
\]
 Furthermore let $k^*-1$ be the last $k$ such that $b_k < s_k$, so that $b_{k*} \ge s_{k^*}=\G(f(y_*), S_k)$, for some $ y_* \in \partial B_\rho$.

Now we will show by backwards induction that for each $k_* \le k \le q$ we have
\begin{equation}\label{eq:energyjumpmon}
M^{2(k-q)} c^2 \le \Theta_f(B_\rho, S_k, s) \,,\quad\text{ for } M b_{k+1} \le s \le b_k\,. 
\end{equation}
This is obtained combining \eqref{eq:energyjump} with the the energy monotonicity, Proposition \ref{prop:energymon}. Note that \eqref{eq:energyjumpmon} implies that 
\[ 
b_k^2 M^{2(k-q)}c^2 \le  \int_{B_\rho} |Df|^2 \,dx\,,\qquad \forall k\ge k^*\,,
\]
where we used that
\[
\{x\in B_\rho\,:\,\G(f, S_k)< s_k\}\subset B_\rho\,.
\]
Using (3) of Lemma \ref{lem:key} and the fact that $\sep(S_k)\leq 4 b_k$, for $k^*-1\leq k\leq q$, by the choice of $k^*$, we then conclude 
\begin{align*}
\G(f(x_0), f(y_*)) &\le \sum_{l \ge k_*+1}^q \G(S_l, S_{l-1}) + \G(f(y_*), S_{k^*}) \\
& \le  \frac{4}{\tilde\beta(\eps,Q)^Q} \sum_{l \ge k^*}^q b_k \le C \left( \int_{B_\rho} |Df|^2 \, dx \right)^\frac12\,.
\end{align*}

\medskip

\emph{Claim 2: } For a.e. $x \in B_\rho$ we have \eqref{eq:claim1}. 

Fix $y_1 \in \partial B_\rho$ and note that for each $x_0\in B_\rho$ as in Claim 1 or any other $x_0\in \partial B_\rho$ we have due to Claim 1 and \eqref{eq:Hoelderonboundary} that
\[ \G(f(x_0),f(y_1))^2 \le C \int_{B_\frac32} |Df|^2 \, dx = C E_0\,. \]

Consider the $W^{1,1}$ function defined on $\overline{B}_\rho$ by 
\[ g(x) = \left( \G(f(x), f(y_1))^2 - 2C E_0\right)_+\,.\]
We clearly have that $g=0$ on $\partial B_\rho$ and due to Claim 1 we have for each $x_0$ as in Lemma \ref{lem:lowbound} 
\[ Dg(x_0) = 0\,. \]
On the other hand for a.e. other point $x \in B_\rho$ we have $Df(x)=0$ hence $Dg(x)=0$, which implies $g\equiv 0$ proving the claim. 

\medskip

We can now conclude the proof of the theorem.  Indeed notice that after translating and scaling back, we have proved that 
\[ \G(f(x), f(y) )^2 \le  \int_{B_{\frac32 r}(y_0)} |Df|^2 \, dx \text{ for all } x,y \in B_r(y_0)\,.\]
Combining this with \eqref{eq:conformalenergy}, yields the conclusion.
\qed

\subsection{Higher integrability: proof of Theorem \ref{thm:highint}}\label{subsec.higherintegrability}
Theorem \ref{thm:highint} will be a consequence of a Gehring's type lemma and the following inequality. Notice the difference with \cite[Proposition 5.2]{DS1}, which is obtained with a smoothed competitor constructed via the Almgren's embedding, while our proof is independent of this embedding.

\begin{lemma}\label{lem:highint}
There is a constant $C=C(Q)>0$ such that whenever $f \in W^{1,2}(B_2, \Iq)$ is stationary with respect to outer variations and continuous in $B_{\frac{2Q-1}{Q}}$ then 
\begin{equation}\label{eq:gehring1}
    \int_{B_{1+\frac{1}{2Q}}} |Df|^2 \, dx\le C \left( \left(\int_{B_2} |Df|^2 \, dx \right)^\frac12+ \omega_f\left(2-\frac{1}{Q}\right) \right) \left( \int_{B_2} |Df| \,dx\right)
\end{equation}
where $\omega_f$ is a modulus of continuity for $f$ in $B_{\frac{2Q-1}{Q} }$, i.e. $\G(f(x),f(y)) \le \omega_f(|x-y|)$ for all $x,y \in B_{1-\frac1Q}$.
\end{lemma}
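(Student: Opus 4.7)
The plan is to test the outer variation \eqref{eq:outer} with a Caccioppoli-type vectorfield and reduce \eqref{eq:gehring1} to an $L^\infty$-bound on $\G(f,Q\a{c})$ for a well-chosen $c\in\R^n$; such a $c$ will then be produced by induction on the multiplicity $Q$ combined with the splitting Lemma \ref{lem:camilloemanuele}. Concretely, let $\phi\in C^\infty_c(B_{1+1/Q})$ satisfy $\phi\equiv 1$ on $B_{1+1/(2Q)}$ and $|\nabla\phi|\leq C_Q$, and for a constant $c\in\R^n$ test \eqref{eq:outer} with $\psi(x,u)=\phi^2(x)(u-c)$ (admissible since $|\psi|\leq C_Q(1+|u|)$ and $|D_u\psi|\leq 1$). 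Using that $\sum_i\nabla|f_i-c|^2 = \nabla\G(f,Q\a{c})^2$, a direct computation yields
\[
\int\phi^2\,|Df|^2\,dx \;=\; -\int\phi\,\nabla\phi\cdot\nabla\G(f,Q\a{c})^2\,dx \;\leq\; 2\,\|\G(f,Q\a{c})\|_{L^\infty(\spt\phi)}\int\phi\,|\nabla\phi|\,|Df|\,dx,
\]
hence
\[
\int_{B_{1+1/(2Q)}}|Df|^2\,dx \;\leq\; C_Q\,\|\G(f,Q\a{c})\|_{L^\infty(B_{1+1/Q})}\,\int_{B_2}|Df|\,dx.
\]
It therefore suffices to find $c$ with $\|\G(f,Q\a{c})\|_{L^\infty(B_{1+1/Q})}\leq C_Q(\|Df\|_{L^2(B_2)}+\omega_f(1))$.

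The base case $Q=1$ is immediate: $f$ is single-valued and weakly harmonic, and $c=f(0)$ gives $|f(x)-c|\leq \omega_f(1)$ everywhere. For the inductive step (assuming the result for all multiplicities $<Q$), fix $\bar x=0$ and apply Lemma \ref{lem:camilloemanuele} to $T=f(\bar x)$ with a small $\eps=\eps(Q)>0$, obtaining $S=\sum_{j=1}^q Q_j\a{s_j}$ with $\sep(S)\geq \beta(\eps,Q)\,\diam(T)$, $\G(T,S)\leq \eps\,\sep(S)$, and $\spt S\subset \spt T$. Two cases arise. If $\diam(T)\leq K(Q)\,\omega_f(1)$ for a suitable constant $K(Q)$ (non-splitting case), pick $c=s_1\in \spt T$ and use the triangle inequality with the modulus of continuity to get $\G(f(x),Q\a{c})\leq \sqrt Q(\omega_f(|x|)+\diam(T))\leq C_Q\,\omega_f(1)$ on $B_{1+1/Q}$, which yields \eqref{eq:gehring1} via the estimate above. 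If instead $\diam(T)>K(Q)\,\omega_f(1)$ (splitting case), then $\sep(S)\geq 4\eps^{-1}\omega_f(1)$ for $K(Q)$ large, so by continuity $\G(f(x),S)\leq \omega_f(1)+\eps\,\sep(S)\leq 2\eps\,\sep(S)$ throughout $B_{(2Q-1)/Q}$; thus $f(x)\in\mathcal P_{2\eps}(S)$ and $f$ decomposes canonically as $f=\sum_{j=1}^q f^{(j)}$ with $f^{(j)}:B_{(2Q-1)/Q}\to \I{Q_j}(\R^n)$, $Q_j<Q$, each continuous with the same modulus $\omega_f$ and stationary for outer variations (admissible vectorfields for $f^{(j)}$ extend by zero outside the disjoint projection balls $B_{2\eps r_j}(s_j)$ to admissible vectorfields for $f$). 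Since $Q_j<Q$ implies $(2Q_j-1)/Q_j<(2Q-1)/Q$ and $1+1/(2Q_j)>1+1/(2Q)$, the inductive hypothesis applies to each $f^{(j)}$ on $B_2$, and summing the resulting inequalities (using $|Df|^p=\sum_j|Df^{(j)}|^p$ a.e.\ for $p=1,2$) gives \eqref{eq:gehring1} for $f$.

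The main technical difficulty is the bookkeeping of the two radii $(2Q-1)/Q$ and $1+1/(2Q)$ through the induction: after each step the outer continuity radius shrinks while the inner energy radius enlarges, but the nesting $B_{1+1/(2Q)}\subset B_{1+1/(2Q_j)}\subset B_{(2Q_j-1)/Q_j}\subset B_{(2Q-1)/Q}\subset B_2$ is always preserved, so the induction closes with $C_Q$ growing polynomially in $Q$. Two secondary subtleties are: (i) checking that each piece $f^{(j)}$ is really stationary with respect to outer variations with the claimed modulus of continuity, which requires the disjointness of the projection neighborhoods in the target plus a target-truncation argument on admissible vectorfields to show that the outer variation decouples across the split; and (ii) the explicit $\|Df\|_{L^2(B_2)}$ term in \eqref{eq:gehring1} arises only as slack absorbing the splitting-threshold constant $K(Q)\omega_f(1)$, via the bound $\omega_f(1)\lesssim \|Df\|_{L^2(B_2)}$ coming from the modulus-of-continuity estimate of Theorem \ref{thm:cont}.
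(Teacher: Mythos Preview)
Your approach is essentially the paper's: induction on $Q$, with a dichotomy between the case where $f$ is tightly localised in the target (handled directly via the outer variation) and the case where $\diam f(\bar x)$ is large compared to the oscillation (split via Lemma~\ref{lem:camilloemanuele} and induct). The cosmetic differences are that the paper uses a Fubini--good boundary sphere and the identity $\int_{B_r}|Df|^2=\int_{\partial B_r}\partial_r f\cdot f$ rather than your Caccioppoli cutoff, and that the paper first subtracts the harmonic average $\eta\circ f$. It is precisely this subtraction that produces the $\|Df\|_{L^2(B_2)}$ term in \eqref{eq:gehring1}, via $\omega_{f\ominus h}(r)\le\omega_f(r)+r\|Df\|_{L^2}$; your remark (ii) is off here, since Theorem~\ref{thm:cont} needs inner stationarity, which the lemma does not assume.

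There is, however, a genuine gap in your inductive step. You apply the inductive hypothesis to each $f^{(j)}$ \emph{on $B_2$}, but the decomposition $f=\sum_j f^{(j)}$ is only available on the continuity ball $B_{(2Q-1)/Q}$; the pieces $f^{(j)}$ are neither defined nor outer--stationary on $B_2\setminus B_{(2Q-1)/Q}$. The nesting $B_{1+1/(2Q_j)}\subset B_{(2Q_j-1)/Q_j}\subset B_{(2Q-1)/Q}$ that you record does not close the induction as stated, because the right-hand side of the inductive inequality still carries $\int_{B_2}|Df^{(j)}|$ and $\int_{B_2}|Df^{(j)}|^2$. The paper deals with this by rescaling (``apply the lemma to $f_i$ in $B_r$''). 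In your setup the cleanest fix is to strengthen the statement you prove by induction so that the outer ball is $B_{(2Q-1)/Q}$ rather than $B_2$ and all integrals on the right are over $B_{(2Q-1)/Q}$: your cutoff $\phi$ is supported in $B_{1+1/Q}\subset B_{(2Q-1)/Q}$ for $Q\ge2$ anyway, so nothing outside that ball is ever used. The case $Q_j=1$ (for which $1+1/Q_j=2$ exceeds $(2Q-1)/Q$) must then be treated separately, but that is just the classical harmonic estimate with a cutoff supported in $B_{(2Q-1)/Q}$. A smaller point: throughout you should be using $\omega_f(2-1/Q)$ rather than $\omega_f(1)$, since $|x-\bar x|$ ranges up to the diameter of the continuity ball; this is harmless for the application.
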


\begin{proof}
We can assume $\omega_f\left(2-1/Q\right)>0$, otherwise there is nothing to prove since $f$ would be constant. We will prove the lemma by induction on $Q$. Let us fix a radius $\frac{2Q-2}{Q} <r <\frac{2Q-1}{Q}$ such that $$\int_{\partial B_r} |Df| \le 4Q \int_{B_2} |Df| \, dx.$$

\emph{Base step:} This follows from classical regularity theory. 
Subtracting a constant we may assume that $\fint_{\partial B_r} f  = 0$, then we have from the outer variation 
\begin{align*}
    \int_{B_r} |Df|^2 &= \int_{\partial B_r} \partial_rf \cdot \left(f- \fint_{\partial B_r} f\right) \le C \, \left(\sup_{x\in \partial B_r} |f- \fint_{\partial B_r} f|\right)\,\int_{B_2} |Df|  \\
    &\le C \left( \int_{B_2} |Df|^2\right)^{\frac12}\, \int_{B_2} |Df|.
\end{align*} 

\emph{Inductive step:}
Notice that we can assume $\eta \circ f = 0 $. Indeed the average $h=\eta \circ f$ is harmonic, and so as pointed out in Lemma \ref{lem.OuterInvariance} the map $f\ominus h$ is still stationary with respect to the outer variations. Moreover it is still continuous, with modulus of continuity estimated by
\[ \omega_{f\ominus h}(r) \le \omega_f (r)+ r\left(\int_{B_2}|Df|^2\right)^\frac12\]
 since the Lipschitz constant of $h$ in $B_r$ is controlled by $\left(\int_{B_2} |Dh|^2\right)^\frac12 \le \left(\int_{B_2} |Df|^2\right)^\frac12$. Since we have the pointwise identities
\begin{align*}
    \G^2(f(x),f(y)) &= \G^2((f\ominus h)(x), (f\ominus h)(y))+ Q |h(x)-h(y)|^2\\
    |Df(x)|^2 &= |D(f\ominus h)(x)|^2 + Q |Dh(x)|^2\,,
\end{align*}
we see that the lemma is proved if it holds for $f\ominus h$, since it holds for $h$ by the case $Q=1$.

Let $M>0$ be a large number chosen later and let 
\[ D= \sup_{x\in B_r} |f(x)| = |f(x_0)|^2\,.\]
for some $x_0 \in \overline{B_r}$. 

If $D< M\, \omega_f\left(2-1/Q\right) $, we can just argue as above using the outer variations:
\[\int_{B_r} |Df|^2 = \int_{\partial B_r} \partial_rf \cdot f \le M \omega_f\left(2-\frac{1}{Q}\right) \int_{B_1} |Df|  \,.
\]

If $D> M\, \omega_f\left(2-1/Q\right)$, we apply Lemma \ref{lem:camilloemanuele} with $T=f(x_0)$ and $\eps = \frac1{16}$ and obtain a point $S$ such that 
$$\beta(\eps,Q) D \le \sep(S)
\qquad\text{and}\qquad \G(f(x_0),S) \le \frac{1}{16} \sep(S)\,.$$
Using this and triangular inequality, we have for $M$ sufficiently large
\[\G(f(x), S) \le 2\omega_f\left(2-\frac{1}{Q}\right) +  \frac{1}{16}\sep(S) \le \left(\frac{2}{M\beta} + \frac{1}{16}\right) \sep(S) \le \frac18 \sep(S)\]
Hence, since $\sep(S)>0$, as pointed out in Remark \ref{rem:natsplit} $f$ splits in $B_r$, i.e. $f= \a{f_1} + \a{f_2}$. Furthermore the remark implies that $\omega_{f_i} \le \omega_f$ for $i=1,2$. Since the outer variation splits, $f_i$ are both stationary with respect to the outer variation. Hence we can apply the lemma to $f_i$ in $B_r$ separately and obtain 
% \[r^{2-n}\int_{B_{(1+\frac{1}{2(Q-1)})r}} |Df_i|^2 \, dx\le C \left( \left( r^{2-n} \int_{B_r} |Df_i|^2 \, dx \right)^\frac12+ \omega_{f_i}(1) \right) \left( r^{1-n} \int_{B_r} |Df_i| \,dx\right)\]
\[
\int_{B_{\left(1+\frac{1}{2(Q-1)}\right)r}} |Df_i|^2 \, dx\le \frac{C}{r} \left( \left(r^{2-n}\int_{B_r} |Df_i|^2 \, dx \right)^\frac12+ \omega_{f_i}(2-\frac1Q) \right) \left( \int_{B_r} |Df_i| \,dx\right)\,.
\]
Adding on $i=1,2$ gives the result.
\end{proof}

\begin{proof}[Proof of Theorem \ref{thm:highint}\,.]
First we prove that for any $B_{2r} \subset B_1$ we have 
\begin{equation}\label{eq:gehring} \fint_{B_r} |Df|^2 \le  C \left( \fint_{B_{2r}} |Df|^2 \right)^\frac12 \fint_{B_{2r}} |Df| \,. 
\end{equation}
Since this is scale invariant it is sufficient to prove it for $r=1$. Theorem \ref{thm:cont} implies that $f$ is continuous on $B_{2- 1/Q}$ with modulus of continuity controlled by \[\omega_f(r) \le C\left(\int_{B_2} |Df|^2\right)^\frac12(1+r) \,.\]
Hence we can apply the lemma above, Lemma \ref{lem:highint} and obtain the result.

Now Theorem \ref{thm:highint} follows from a standard modification of \cite[Theorem 6.38]{GiaquintaMartinazzi}, with equation (6.48) there replaced with \eqref{eq:gehring} i.e. 
\[ 
\fint_{B_R(x_0)} f^q \le b  \left( \fint_{B_{2R}(x_0)} f^q \right)^\frac{q-1}{q} \,\fint_{B_{2R}(x_0)} f
\]
Indeed the only place where (6.48) is used is equation (6.53), which in our case should be replaced with
\begin{align*}
    t^q
    &\leq \fint_{Q^l_{k,j} }(f\phi)^q  \le \sigma^q|P_{k,j}|^q\fint_{Q^l_{k,j} }(f)^q \le \sigma^q|P_{k,j}|^q\left(\fint_{Q^{l,(2)}_{k,j} }f\right) \left(\fint_{Q^{l,(2)}_{k,j} }f^q\right)^{q-1}\\
    &\le \sigma^{2q}  \left(\fint_{Q^{l,(2)}_{k,j} }\phi f\right) \left(\fint_{Q^{l,(2)}_{k,j} }(\phi f)^q\right)^{q-1} \le 2^n \sigma^{2q}\left(\fint_{Q^{l,(2)}_{k,j} }\phi f\right) t^{q-1}\,,
\end{align*}
where the last inequality follows from equation (6.51) in \cite{GiaquintaMartinazzi}. The rest of the proof is exactly the same.
\end{proof}

\subsection{Counterexample to continuity: proof of Proposition \ref{prop:weaknotcont}}
In this section we provide an example of a function $f\in W^{1,2}(B_{\frac12}, \A_2(\R))$ that is stationary with respect to the outer variations, but has a discontinuity point in $B_\frac14$.

We start with the following observation. Let $g \in C^1(\overline{\{g>0\}})\cap W^{1,2}(\Omega)$ non-negative such that $\Delta g=0$ on $\{g>0\}$ and $\partial \{ g>0 \}$ is $C^1$ regular, then $f=\a{g} + \a{-g} \in W^{1,2}(\Omega, \A_2(\R))$ is stationary with respect to outer variations. Indeed let $\psi(x,u)$ be any admissible variation then 
\begin{align*}
    \mathcal{O}(f,\psi) &= \int_{\{g>0\}} \sum_{i=0,1} \langle (-1)^iDg\colon D_x\psi(x,(-1)^ig)\rangle + \langle Dg \colon D_u\psi(x,(-1)^i g)Dg \rangle\\ 
    &= \int_{\{g>0\}} \sum_{i=0,1} (-1)^i \langle Dg \colon D (\psi(x,(-1)^ig))\rangle \\
    &= \int_{\partial \{g>0\}\cap \Omega} \langle \partial_\nu g \colon \psi(x,0)\rangle - \langle \partial_\nu g \colon \psi(x,0)\rangle = 0 \,.
\end{align*}
Notice also that the definition of $f$ implies that $\int_\Omega |Df|^2 = 2 \int_\Omega |Dg|^2$.

Next let $\{x_k\}_k\subset B_{\frac12}$, $r_k \downarrow 0$ satisfying 
\begin{itemize}
    \item[a)] $x_k \to x_0$, $|x_0|<\frac14$;
    \item[b)] $|x_k-x_l|>r_k+r_l$ for $k\neq l$.
\end{itemize}
For instance $x_k=\frac{1}{2k}$ with $r_k = 4^{-k}$ would work. 
For each $r_k$ choose $0<\rho_k<r_k$ such that 
\begin{itemize}
    \item[c)] $\left|\frac{\ln r_k}{\ln(\rho_k)}\right|<4^{-k}$;
    \item[d)] $ \int_{B_1\setminus B_{\rho_k}} \left|D\frac{\ln|x|}{\ln\rho_k}\right|^2 = \frac{2\pi}{|\ln\rho_k|}< 4^{-k}$.
\end{itemize}
Now we define by induction $g_0=1$ and \[g_{k}= \left(g_{k-1} + \frac{\ln|x-x_k|}{|\ln\rho_k|}\right)^+\,.\]
In particular $g_k\ge 0$ and $\Delta g_k=0$ on $\{g_k>0\}$.
Note that $0\le g_{k} \le g_{k-1} \le 1$ and $g_{k-1}-g_{k} < 4^{-k}$ on $B_{\frac12}\setminus B_{r_k}(x_k)$. Hence we have that 
\begin{align*} g_{k} \ge 1- \sum_{l=1}^{k} 4^{-l} \ge \frac23\qquad  \text{ on } B_{\frac12} \setminus \bigcup_{l=1}^{k} B_{r_l}(x_l)\\
\norm{D(g_k-g_{k-1})}_{L^2(B_1)} \le \norm{D\frac{\ln|x-x_k|}{\ln\rho_k}}_{L^2(B_1\setminus B_{\rho_k}(x_k))} \le 2^{-k}\,.\end{align*}
Since $\frac{\ln|x-x_k|}{|\ln\rho_k|}\le -1$ on $B_{\rho_k}(x_k)$, we deduce that $g_k=0$ on $B_{\rho_k}(x_k)$ and that $g_k$ satisfies the assumption of the observation above, hence 
\[ f_k = \a{g_k} + \a{-g_k}\] 
satisfies $\mathcal{O}(f_k,\cdot) =0$\,.
Due to the established estimates $f_k$ is a Cauchy sequence in $W^{1,2}(B_\frac12, \A_2(\R))$ and $C^0(B_\frac12\setminus \bigcup B_{r_k}(x_k), \A_2(\R))$ with limit $f$. In particular we deduce 
\begin{itemize}
    \item[i)] $\mathcal{O}(f,\cdot)=0$;
    \item[ii)] $|f|(x)\ge 2 \,\frac23$, for $|x-x_k|=r_k$, $k \in \N$
    \item[iii)] $|f|(x)=2\a{0}$ for $|x-x_k|<\rho_k$, $k\in \N$.
\end{itemize}
The last two points together imply that $x_0$ is a discontinuity point.
\qed

\section{Concentration compactness: proof of Theorem \ref{thm:comp}}

In this section we prove Theorem \ref{thm:comp}. We proceed by induction on $Q$.

\medskip

\noindent\emph{Base step:}
Up to subtracting a constant, this follows from the regularity of harmonic functions. 

\medskip

\noindent\emph{Inductive step:} Notice that the averages $\eta \circ f_k$ are harmonic, by outer variation equal to $0$, and moreover
\[
|Df_k|^2=|Df_k \ominus (\eta \circ f_k) |^2+Q\,|\eta \circ f_k|^2\,,
\]
so we can assume without loss of generality that $\eta\circ f_k=0$ for every $k$. We split the rest of the proof in two steps. 

\medskip

\noindent\emph{Step 1}:
We first prove the theorem under the additional assumption that for all $k$ we have
\begin{equation}\label{eq:proofunderL2bound}
    |f_k(0)|\leq C<\infty\,.
\end{equation}
By the equicontinuity, assumption (1), we know that there exists $f\in W^{1,2}(B_1,\Iq)$, such that, up to a not relabelled subsequence, $f_k$ converges to $f$ in $C^0$ and weakly in $W^{1,2}$. Next let 
\[
\mu=\lim_{k\to \infty}|Df_k|^2-|Df|^2\geq 0\,,
\]
the limit intended in the sense of measures, that is $\mu$ is the defect measure and it is non-negative by lower semicontinuity of the energy. We are going to prove that $\mu=0$, which gives the desired result.

Indeed, if $x\in \{x\in B_1\,:\,|f|>0 \}$, then by continuity of $f$ there exists a radius $r>0$, depending on $x$, such that 
\[
f(y)=\a{f^1(y)}+\a{f^2(y)}\qquad \forall y\in B_{r}(x)\,,
\]
with $f^i\in W^{1,2}(B_1,\I{Q_i})$, $Q_i\geq 1$. By the uniform convergence, for $k$ sufficiently large, we have
\[
f_k(y)=\a{f^1_k(y)}+\a{f^2_k(y)}\qquad \forall y\in B_{r}(x)\,.
\]
By Remark \ref{rem:natsplit}, we also have
\[
\G^2(f_k,f)=\G^2(f_k^1,f^1)+\G^2(f_k^2,f^2)\,,
\]
so that, $|f_k^i(x)|\leq C<\infty$. In particular $(f^i_k)_k$ satisfies the inductive assumptions, including \eqref{eq:proofunderL2bound}, and we have that 
\[
\lim_{k\to \infty}\int_{B_r(x)}|Df^i_k|^2=\int_{B_r(x)}|Df^i|^2\qquad i=1,2\,,
\]
so that $\mu(B_r(x))=0$.
This implies that
\begin{equation}\label{eq:mu1}
    \mu(\{x\in B_1\,:\,|f|>0 \})=0\,.
\end{equation}

To analyse $\mu$ on the set $\{ x \in B_1\colon |f|=0\}$ fix smooth non-decreasing $\theta$ vanishing on $(-\infty, 1]$ and equal to $1$ on $[2,\infty)$. Moreover let $\eta$ be a smooth function which is $1$ in the ball of radius $r$, less than or equal to $1$, and supported in $B_1$. Then we test the outer variation \eqref{eq:outer} with the test function
\[
\psi(x,u):=\eta(x)\,\theta\left(\frac{\ln|u|}{\ln\delta}\right)\, u\,.
\]
Since \[D_u\psi(x,u)=\eta(x)\,\left(\theta\left(\frac{\ln|u|}{\ln\delta}\right)\,\mathbf{1} + \frac{1}{\ln\delta}\,\theta'\left(\frac{\ln|u|}{\ln\delta}\right) \frac{u\otimes u}{|u|^2}\right)\]
we obtain from the outer variation
\begin{align*}
\int_{\{|f_k|<\delta^2\}}\, \eta\, |Df_k|^2
    &\leq \int \eta\, \theta\left(\frac{\ln|f_k|}{\ln\delta}\right)\, |Df_k|^2\\
    &=-\int  \theta\left(\frac{\ln|f_k|}{\ln\delta}\right)\, f_k\,Df_k\,\nabla \eta-\frac{1}{\ln\delta}\int \eta\, \theta'\left(\frac{\ln|f_k|}{\ln\delta}\right)\, \frac{(f_k\cdot Df_k)^2}{|f_k|^2}\\\ 
    &\leq \frac{1+\|\theta'\|}{|\ln\delta|}\,\left(\| Df_k\|_{L^2(B_1)}\|\nabla \eta\|_{L^2(B_1)}+\| Df_k\|^2_{L^2(B_1)}\right) \,.
\end{align*}
% where the equality follows from the outer variation and the first inequality from the choice of $\eta$.
Passing to the limit we obtain
\[
\mu(B_r(x)\cap \{|f|<\delta^2\})\leq \int_{ \{|f|<\delta^2\}} \eta \,d(\mu+|Df|^2)\leq \frac{C}{|\ln\delta|}
\]
so that we can conclude
\[
\mu(\{ |f|=0\})=\lim_{\delta \to 0}\mu(\{ |f|<\delta^2\})=0\,,
\]
which combined with \eqref{eq:mu1} concludes the proof of Step 1.

% Next suppose that $x\in \{x\in B_1\,:\,|f|=0 \}$. Let $\theta\colon [0,\infty)\to [0,\infty)$ be a nonnegative function equal to  $1$ on $[0,1]$ and $0$ on $[2,\infty]$. Moreover let $\eta$ be a smooth function which is $1$ in the ball of radius $r$, less than or equal to $1$, and supported in $B_1$. Then we test the outer variation \eqref{eq:outer} with the test function
% \[
% \psi(x,u):=\eta(x)\,\theta\left(\frac{|u|}{\delta}\right)\, u\,,
% \]
% to obtain
% \begin{align*}
% \int_{\{|f_k|<\delta\}}\, \eta\, |Df_k|^2
%     &\leq \int \eta\, \theta\left(\frac{|f_k|}{\delta}\right)\, |Df_k|^2\\
%     &=-\int  \theta\left(\frac{|f_k|}{\delta}\right)\, f_k\,Df_k\,\nabla \eta-\int \eta\, \theta'\left(\frac{|f_k|}{\delta}\right)\, \frac{f_k\,Df_k\,f_k\, Df_k}{|f_k|}\\\ 
%     &\leq \delta\,\left(\| Df_k\|_{L^2(B_1)}\|\nabla \eta\|_{L^2(B_1)}+\| Df_k\|^2_{L^2(B_1)}\right) \,,
% \end{align*}
% where the equality follows from the outer variation, the first inequality from the choice of $\eta$, and the last inequality from H\"older.
% Passing to the limit we obtain
% \[
% \mu(B_r(x)\cap \{|f|<\delta\})\leq \int_{ \{|f|<\delta\}} \eta (\mu+|Df|^2)\leq C\, \delta
% \]
% so that we can conclude
% \[
% \mu(\{ |f|=0\})=\lim_{\delta \to 0}\mu(\{ |f|<\delta\})=0\,,
% \]
% which combined with \eqref{eq:mu1} concludes the proof of Step 1.

\medskip
\noindent\emph{Step 2:} It remains to consider the case $\limsup_{k \to \infty} |f_k(0)| = \infty$. We apply Lemma \ref{lem:camilloemanuele} to $f_k(0)$ with parameter $\epsilon = \frac18$ and obtain points $S_k=\sum_{i=1}^{q_k} Q_k^i  \a{s_k^i}$. Passing to a subsequence, we may assume that $q_k, Q_k^i$ do not depend on $k$. 
By assumptions \[\frac{\beta\left(\frac18, Q\right)}{2Q}\; |f_k(0)| \le \beta\left(\frac18 ,Q\right) \;\diam(f_k(0)) \le \sep(S_k) \]
hence $\lim_{k\to \infty} \sep(S_k) = \infty$. By the equicontinuity and Remark \ref{rem:natsplit}, for sufficiently large $k$ the functions $f_k$ split, i.e. $f_k= \sum_{i=1}^q \a{f^i_k}$, with $f^i_k$ themselves satisfying the assumptions of Step 1, so nothing travels off to infinity. Therefore we can apply the inductive step once again and conclude.
\qed

\begin{corollary}[Compactness for $2$-d stationary maps]\label{cor:compactstationary}
	Let $f_k \in W^{1,2}(B_R, \Iqs)$, $B_R\subset \R^2$, be a sequence of stationary maps satisfying 
	\[ f_k(0)=Q\a{0} \quad\text{ and }\quad \sup_{k} \int_{B_R} |Df_k|^2 < \infty\,,\]
	then there exists a stationary map $f \in W^{1,2}(B_R, \Iqs)$ such that for every $0<r<R$
	\[\lim_{k \to \infty} \sup_{B_r} \G(f_k(x),f(x)) =0 \quad\text{ and } \quad\lim_{k \to \infty} \int_{B_R} ||Df_k|-|Df||^2 =0 \,.\]
\end{corollary}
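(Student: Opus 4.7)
The plan is to build the limit via Theorem \ref{thm:cont}, upgrade to strong energy convergence via Theorem \ref{thm:comp}, and then pass to the limit in the stationarity equations for $f$.

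First, Theorem \ref{thm:cont} applied after rescaling to each ball $B_\rho(x_0)\ssubset B_R$ yields a modulus of continuity for $(f_k)$ uniform on compact subsets of $B_R$; combined with $f_k(0)=Q\a{0}$ this also forces a uniform $L^\infty$ bound on compact subsets. By Ascoli--Arzel\`a and a diagonal argument there is a subsequence (still denoted $f_k$) and a continuous $f\in W^{1,2}(B_R,\Iqs)$ such that $f_k\to f$ uniformly on compact subsets and $Df_k\weak Df$ weakly in $L^2$; in particular $f(0)=Q\a{0}$.

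Second, for each $r<R$ I would apply Theorem \ref{thm:comp} to $(f_k)|_{B_r}$: its three hypotheses hold by the equicontinuity from Theorem \ref{thm:cont}, the energy bound, and the stationarity assumption $\mathcal O(f_k,B_r)=0$. The uniform $L^\infty$ bound forces the shift vectors $a_i(k)$ produced by Theorem \ref{thm:comp} to be bounded; after a further subsequence $a_i(k)\to a_i$ and $g_k=\sum_i a_i(k)\oplus h_i$ converges uniformly to $g=\sum_i a_i\oplus h_i$, so by \eqref{eq:conccomp1} one identifies $f=g$ on $B_r$. Then \eqref{eq:conccomp2} reads $\int_{B_r}\bigl||Df_k|^2-|Df|^2\bigr|\to 0$; extracting a further subsequence so that $|Df_k|\to|Df|$ a.e.\ and noting $\||Df_k|\|_{L^2(B_r)}\to\||Df|\|_{L^2(B_r)}$, a Brezis--Lieb argument upgrades this to strong $L^2$ convergence $|Df_k|\to|Df|$, which, letting $r\uparrow R$ and using the global energy bound, yields the second claim of the corollary.

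Third, to prove that $f$ is itself stationary one passes to the limit in $\mathcal O(f_k,\psi)=0$ and $\mathcal I(f_k,\phi)=0$. The linear-in-$Df_k$ terms converge by weak convergence combined with uniform convergence of $\psi(x,f_k)$ and $\phi$; the quadratic-in-$Df_k$ terms (such as $\sum_i\langle Df^i_k:D_u\psi\cdot Df^i_k\rangle$ and $\sum_i\langle Df^i_k:Df^i_k\cdot D\phi\rangle$) require strong $L^2$ convergence of $Df_k$ in the $Q$-valued sense. Using the uniform convergence and Remark \ref{rem:natsplit}, near any $x_0$ where $f(x_0)$ has nontrivial separation both $f_k$ and $f$ split consistently into components of strictly smaller multiplicity and one reduces by induction on $Q$; near $x_0$ with $f(x_0)=Q\a{p}$ the $L^1$ convergence $|Df_k|^2\to|Df|^2$ rules out energy concentration. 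The main obstacle is exactly this last step, namely upgrading the $L^1$ energy convergence to the strong $L^2$ gradient convergence required for the quadratic terms in the inner variation; the sheet-matching from Remark \ref{rem:natsplit} together with the cutoff argument used in Step 1 of the proof of Theorem \ref{thm:comp} is what makes this possible.
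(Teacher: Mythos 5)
Your proposal follows essentially the same route as the paper: derive equicontinuity from the Hölder estimate, invoke Theorem \ref{thm:comp} and observe that the normalization $f_k(0)=Q\a{0}$ rules out the splitting/escape of sheets, then argue that both variation identities are preserved under the resulting strong $W^{1,2}$ convergence. One small simplification: the Brezis--Lieb detour is unnecessary, since $(a-b)^2\le |a^2-b^2|$ for $a,b\ge 0$ gives directly $\int_{B_r}\bigl||Df_k|-|Df|\bigr|^2\le \int_{B_r}\bigl||Df_k|^2-|Df|^2\bigr|\to 0$; and the passage of the quadratic terms in $\mathcal O$ and $\mathcal I$ to the limit, which you flag as the delicate step and handle by combining the sheet-splitting of Remark \ref{rem:natsplit} with the cutoff argument of Step 1 of Theorem \ref{thm:comp}, is exactly what the paper implicitly appeals to when asserting continuity of the variations under strong $W^{1,2}$ convergence.
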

\begin{proof}
	Since all the maps $f_k$ are stationary with uniformly bounded Dirichlet energy they are uniformly H\"older continuous, compare \ref{eq:holder}. Furthermore the assumption $f_k(0)=Q\a{0}$ ensures together with H\"older regularity ensures that the functions don't split. Hence we are in the setting of concentration compactness without splitting and the claimed convergence follows. But since the inner and outer variation are continuous with respect to strong $W^{1,2}$ convergence it follows that $f$ is stationary. 
\end{proof}

\section{Analysis of the singular set}

 In the first subsection we show how, given a stationary map, we can construct a new one whose points of cardinality $k$ are collapsed to $Q\a{0}$. In the second section we recall the frequency function and its consequences in our setting. Finally in the last subsection we conclude the proof of Theorem \ref{thm:dimension}.

\subsection{Splitting of a stationary map}\label{sec:splitting} Given a stationary map we construct a new one whose points of fixed cardinality are collapsed as follows:

\begin{proposition}\label{prop:decomposition}
	Let $f$ be a stationary map with ${\rm card}(f(0))=K$, that is
	\[f(0)= \sum_{k=1}^K Q_k \a{p_k}\,,\quad \text{with }\,p_k\neq p_l\,,\,\forall l\neq k\,. \]
	Then there exist $r_0>0$ and 
	\begin{enumerate}
		\item harmonic functions $h_k \colon B_{r_0} \to \R^n$, with $h_k(0)=p_k$,
		\item average free $Q_k$-valued maps $g_k \in W^{1,2}(B_{r_0}, \overset{\circ}{\mathcal{A}}_{Q_k}(\R^n))$ that are weakly stationary, i.e. stationary with respect to the outer variations, such that
	\end{enumerate}
	\begin{equation}\label{eq:decomposition1}f= \sum_{k=1}^K h_k \oplus g_k\quad \text{in }B_{r_0}\,.\end{equation}
	Furthermore, the new map %if we define the embeddings $i_k \colon \R^n \to \bigotimes^K \R^{n}$ that embed the $\R^n$ into $k$-th. $\R^n$ factor, the new map 
	\begin{equation}\label{eq:decomposition2}
		\tilde{f} = \sum_{k=1}^K g_k\in W^{1,2}(B_{r_0},\Iqsn)
	\end{equation}
	has the properties that 
	\begin{enumerate}
	\setcounter{enumi}{3}
		\item $\tilde{f}$ is stationary with respect to inner and outer variations;
		\item $\tilde{f}(0)=Q\a{0}$;
		\item for all $z \in B_{r_0}$ with $\operatorname{card}(\supp f(z))=K$ we have $\tilde{f}(z)=Q\a{0}$;
    	\item $\sing(f)\cap B_{r_0} \subset \sing(\tilde{f})\cap B_{r}$;
	\end{enumerate}
\end{proposition}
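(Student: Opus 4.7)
Since $\sep(f(0))>0$, Theorem \ref{thm:cont} furnishes $r_0>0$ such that $\spt f(z)\subset \mathcal{P}_{1/8}(f(0))$ for every $z\in B_{r_0}$. Remark \ref{rem:natsplit} then yields continuous maps $f_k\in W^{1,2}(B_{r_0},\I{Q_k})$ supported in $B_{r_k/8}(p_k)$ with $f=\sum_{k=1}^K f_k$. Because the target clusters are pairwise separated, multiplying any admissible outer-variation test $\psi$ by a smooth bump $\chi_k$ equal to $1$ on the $k$-th cluster and vanishing on the others gives $\mathcal{O}(f_k,\psi)=\mathcal{O}(f,\chi_k\psi)=0$, so each $f_k$ is outer-stationary. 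Pointwise disjointness of the sheet-indexing also yields $|Df|^2=\sum_k|Df_k|^2$ a.e.\ in $B_{r_0}$, hence the inner variation is additive: $\mathcal{I}(f,\phi)=\sum_k\mathcal{I}(f_k,\phi)=0$.

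\textbf{Step 2 (Harmonic subtraction and stationarity of $\tilde f$).} Set $h_k=\eta\circ f_k$. Testing the outer variation of $f_k$ with $\psi(x,u)=\zeta(x)v$ for an arbitrary constant $v\in\R^n$ gives $\int Dh_k\cdot D\zeta=0$, so $h_k$ is weakly harmonic and hence smooth, with $h_k(0)=p_k$. Define $g_k=f_k\ominus h_k$; it is average-free with $g_k(0)=Q_k\a{0}$, and Remark \ref{rem:subtractharmonic} together with $\Delta h_k=0$ shows it is outer-stationary, proving (1)--(3). Now set $\tilde f=\sum_k g_k$. Outer stationarity of $\tilde f$ follows from additivity of $\mathcal{O}$ over the sheet-indexing. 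For the inner variation, Remark \ref{rem:subtractharmonic} yields $\mathcal{I}(g_k,\phi)=\mathcal{I}(f_k,\phi)-Q_k\mathcal{I}(h_k,\phi)$, and $\mathcal{I}(h_k,\phi)=0$ since the stress--energy tensor $T_{ij}=2\partial_ih_k\cdot\partial_jh_k-|Dh_k|^2\delta_{ij}$ of a smooth harmonic single-valued map is divergence-free. Summing and invoking Step 1,
\[
\mathcal{I}(\tilde f,\phi)=\sum_k\mathcal{I}(g_k,\phi)=\sum_k\mathcal{I}(f_k,\phi)=\mathcal{I}(f,\phi)=0,
\]
which proves (4).

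\textbf{Step 3 (Remaining properties and the main obstacle).} Property (5) is immediate from the construction. For (6), at any $z\in B_{r_0}$ with $\operatorname{card}(\spt f(z))=K$ each $f_k(z)$ must be of the form $Q_k\a{q_k}$ for a single $q_k$, so $h_k(z)=q_k$ and $g_k(z)=Q_k\a{0}$, giving $\tilde f(z)=Q\a{0}$. For (7), if $z$ is regular for $\tilde f$ the local analytic sheets of $\tilde f$ inherit, via the continuous selections $g_k$, a locally constant partition into groups of size $Q_k$; this yields analyticity of each $g_k$ near $z$, composing with the analytic $h_k$ gives analyticity of $f_k=g_k\oplus h_k$, and the cluster separation from Step~1 then delivers regularity of $f=\sum_kf_k$ at $z$. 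The main obstacle is precisely Step~2: inner stationarity is \emph{not} inherited by the components of a split stationary map (as the remark preceding Section~2 illustrates), so one cannot conclude $\mathcal{I}(g_k,\phi)=0$ individually. The fix is to combine the cluster-induced additivity $\mathcal{I}(f,\phi)=\sum_k\mathcal{I}(f_k,\phi)$ with the Noether identity $\mathcal{I}(h_k,\phi)=0$ for harmonic maps, which precisely cancels the residual inner variations of the $g_k$'s when summed.
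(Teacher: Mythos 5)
Your proof follows the same route as the paper's: split $f$ into the separated clusters $f_k$ using continuity and Remark~\ref{rem:natsplit}, observe that the $f_k$ are individually outer-stationary, subtract the harmonic averages $h_k = \eta\circ f_k$ to form $g_k$, and then recover inner stationarity of $\tilde f$ not sheet by sheet but only after summing, via $\sum_k\mathcal{I}(g_k,\phi)=\sum_k\mathcal{I}(f_k,\phi)=\mathcal{I}(f,\phi)=0$ together with $\mathcal{I}(h_k,\phi)=0$ for harmonic $h_k$ (which is exactly what Remark~\ref{rem:subtractharmonic} encodes). Your treatment of (7) via the locally constant partition of the analytic sheets of $\tilde f$ is a compressed re-derivation of Lemma~\ref{lem:constantcardinalety} and Corollary~\ref{cor:sing=card non continuous}, which is what the paper cites, so the argument is correct and essentially identical in structure.
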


\begin{proof}
	Due to the continuity of $f$ we may choose $r_0>0$ such that
	\[\G(f(x),f(0))<\frac{1}{16} \sep(f(0)) \quad \forall |z|<r_0\,. \]
	This implies that $f$ splits, see for instance Remark \ref{rem:natsplit}, i.e. there are are $Q_k$-valued continuous functions $f_k$ on $B_{r_0}$ such that 
	\[f(z) = \sum_{k=1}^K f_k(z)\,,\qquad z\in B_{r_0}\,. \]
	Since $f$ is weakly stationary and $\dist(\spt(f_k(z)),\spt(f_l(z)))> \frac{1}{16} \sep(f(0))$ on $B_{r_0}$, we deduce that each $f_k$ is itself weakly stationary, i.e.
	\[\mathcal{O}(f_k, \psi)=0 \qquad \forall k\in \N\,. \]
	In particular this implies that the averages $h_k=\eta\circ f_k $ are all harmonic in $B_{r_0}$, i.e. $\Delta h_k =0$. 
	
	Now we set $g_k= f_k \ominus h_k$ and we observe that, since $h_k$ is harmonic, by Lemma \ref{lem.OuterInvariance} and \ref{lem.InnerInvariance}, each $g_k$ satisfies
	\begin{align*}
	\mathcal{O}(g_k, \psi)&= \mathcal{O}(f_k, \psi_{h_k}) =0 \qquad\text{ for all outer variations } \psi \\
	\mathcal{I}(g_k, \phi) &= \mathcal{I}(f_k,\phi) \qquad\text{ for all domain variations } \phi\,.
		\end{align*}
		In particular we have established (1), (2) and \eqref{eq:decomposition1}.
		
		Next we notice that $\tilde{f}$ is stationary since
		\[ 0=\mathcal{I}(f,\phi)=\sum_{k=1}^K \mathcal{I}(f_k, \phi) = \sum_{k=1}^K \mathcal{I}(g_k, \phi) = \mathcal{I}(\tilde{f},\phi)\, .\]
		and analogously, given any admissible outer variation $\psi \in C^\infty(\Omega_x\times \R^{n}_u; \R^{n})$, we have
		\[ \mathcal{O}(\tilde{f}, \psi) = \sum_{k=1}^K \mathcal{O}(g_k, \psi)=0\,.\]
	Hence (4) is proven. (5) follows by construction.
	To deduce (6) we only observe that due to \eqref{eq:decomposition1} and the first two displayed equations in the proof, we have that $\operatorname{card}(\spt(f(z))\ge K$ for all $z\in B_{r_0}$, and equality implies that $g_k(z)=Q_k\a{0}$ for all $k$, since the $g_k$ are average free.
	Arguing by contraposition, (7) is an immediate consequence of the combination of Lemma \ref{lem:constantcardinalety} and Corollary \ref{cor:sing=card non continuous} below.
\end{proof}

\begin{lemma}\label{lem:constantcardinalety}
	Let $f\colon \Omega \to \Iqs$ be continuous on a simply connected domain $\Omega\subset \R^m$  then the following are equivalent:
	\begin{enumerate}
		\item $x \mapsto \operatorname{card}(\spt(f(x))$ is constant. 
		\item there are maps $g_j \in C^0(\Omega, \R^n)$, $j=1, \dotsc, Q$, such that $f=\sum_{j=1}^Q \a{g_j}$ and either $g_j\equiv g_i$ or $g_j(x) \neq g_i(x)$, for every $x \in \Omega$. 
	\end{enumerate}
\end{lemma}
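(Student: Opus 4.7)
The direction $(2) \Rightarrow (1)$ is immediate: $\operatorname{card}(\spt(f(x)))$ equals the number of equivalence classes among $\{g_1,\dots,g_Q\}$ under the relation $g_i \equiv g_j$, which does not depend on $x$.

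For $(1) \Rightarrow (2)$, the plan is to produce continuous single-valued selections locally by means of the balanced splitting of Remark \ref{rem:natsplit}, and then glue them globally by a covering-space lift using that $\Omega$ is simply connected. Fix $x_0 \in \Omega$ and write $f(x_0) = \sum_{i=1}^K m_i \a{p_i}$ with distinct $p_i$. For $0<\epsilon<\frac18$, continuity of $f$ provides a connected neighborhood $U \ni x_0$ with $\spt(f(x)) \subset \mathcal{P}_\epsilon(f(x_0))$ for every $x \in U$. The function $x \mapsto \M(f(x) \res B_{\epsilon r_i}(p_i))$ is integer-valued and continuous on $U$ with value $m_i$ at $x_0$, hence identically $m_i$; so $f(x)$ is balanced on $U$ in the sense of Remark \ref{rem:natsplit}. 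Combined with the hypothesis $\operatorname{card}(\spt(f(x))) = K$, this forces exactly one distinct point per ball (a ball with no point of $\spt(f(x))$ would carry zero mass, contradicting $m_i>0$); thus $f(x) = \sum_{i=1}^K m_i \a{q_i(x)}$ with $q_i(x) \in B_{\epsilon r_i}(p_i)$. Continuity of $q_i$ on $U$ follows from the balanced splitting identity $\G^2(f(x),f(y)) = \sum_i m_i |q_i(x)-q_i(y)|^2$ valid for $x,y \in U$. Listing each $q_i$ with multiplicity $m_i$ yields $Q$ continuous single-valued selections of $f$ on $U$.

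To globalize, on the overlap of two such selection neighborhoods the two corresponding lists must differ by a permutation that preserves the multiplicity partition; since the values of a $Q$-point are either equal or separated by a positive distance, this permutation is locally constant, hence constant on each connected component of the overlap. The local selections thus form the sheets of a finite covering of $\Omega$. Since $\Omega$ is simply connected, the monodromy of this cover is trivial, so fixing one selection at a base point and extending uniquely along paths yields globally defined continuous $g_1,\dots,g_Q : \Omega \to \R^n$ with $f = \sum_j \a{g_j}$. The dichotomy in (2) then follows because, for each pair $i,j$, the set $\{x : g_i(x) = g_j(x)\}$ is open (by the local product structure of the selections) and closed (by continuity), hence either empty or all of $\Omega$.

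The main obstacle is the globalization step: the key insight is to recognize the local selections as the branches of a finite cover of $\Omega$, so that simple connectedness trivializes the monodromy. The local construction itself is an algebraic consequence of the balanced splitting of Remark \ref{rem:natsplit} combined with the constant-cardinality hypothesis.
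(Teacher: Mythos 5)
Your proof is correct and follows essentially the same approach as the paper: prove the implication locally via the balanced splitting of Remark \ref{rem:natsplit} combined with the constant-cardinality hypothesis, then globalize using that the support of the graph of $f$ is a $K$-sheeted cover of the simply connected domain $\Omega$. You are somewhat more explicit than the paper in two places that are worth noting: you argue balancedness on $U$ via the local constancy of the integer-valued mass function $x \mapsto \M(f(x)\res B_{\eps r_i}(p_i))$, and you record explicitly that the transition between overlapping local selections is locally constant; the paper compresses all of this into the single sentence invoking the graph as a covering space. The closing open-and-closed argument for the dichotomy in (2) is redundant given your construction (repeating each $q_i$ with its constant multiplicity $m_i$ already yields either identical or everywhere-distinct $g_j$'s), but it does no harm.
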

\begin{proof}
	Clearly (2) implies (1). To show that (1) implies (2), we  assume that $\operatorname{card}(\spt(f(x)) \equiv  K$ on $\Omega$.  
	
	First we claim that each $x\in \Omega$ has a neighbourhood $U_x\subset \Omega$ where $f$ admits a decomposition as described in (2). \\
	This is true, since $f(x)= \sum_{j=1}^K Q_j \a{t_j}$, with $t_i\neq t_j$ for $i\neq j$, and so by the continuity of $f$ we can apply Remark \ref{rem:natsplit} to find in a neighbourhood $U_x$ where $f$ decomposes into $K$ maps, i.e. there are continuous maps $\tilde{g}_j\colon U_x \mapsto \mathcal{A}_{Q_j}(\R^n)$ such that $f=\sum_{j=1}^K \tilde{g}_j$ and $\spt(\tilde{g}_i) \cap \spt(\tilde{g}_j)=\emptyset$ for $i\neq j$. This implies that $\operatorname{card}(\spt(f(y)) \ge K$ on $U_x$. Since by assumption  $\operatorname{card}(\spt(f(y))=K$, we deduce that $\tilde{g}_j=Q_j \a{g_j}$ on $U_x$, thus proving the claim.
	
	Hence we can consider the set $\spt(\mathbf{G}_f(\Omega))$, that is the support of the graph of $f$, as a covering space of the domain $\Omega$. Since $\Omega$ is simply connected, we obtain the desired  decomposition.  
\end{proof}

\begin{corollary}\label{cor:sing=card non continuous}
	Let $f\colon \Omega \to \Iqs$ be continuous and weakly stationary. Then the following are equivalent \begin{enumerate}
		\item $y \mapsto \operatorname{card}(\spt(f(y))$ is constant in a neighbourhood of $x$,
		\item $x\in \reg(f)$.
	\end{enumerate}
\end{corollary}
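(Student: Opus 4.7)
Both implications rest on Lemma~\ref{lem:constantcardinalety} together with elliptic regularity for weakly harmonic functions. The direction $(2)\Rightarrow(1)$ is immediate from the definition of a regular point: the analytic sheets are pairwise either identically equal or nowhere equal on the neighbourhood $B$, so $\operatorname{card}(\spt f(y))$ equals the number of distinct sheets and is constant on $B$.

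For $(1)\Rightarrow(2)$ I would first shrink to a simply connected neighbourhood $B$ of $x$ on which the cardinality equals some fixed $K$, and apply Lemma~\ref{lem:constantcardinalety} to obtain continuous single-valued selections $g_1,\dots,g_K\in C^0(B,\R^n)$ with multiplicities $Q_1,\dots,Q_K$ (summing to $Q$) such that
\[
f(y)=\sum_{k=1}^K Q_k\a{g_k(y)} \qquad \forall y\in B,
\]
and with $g_i(y)\neq g_j(y)$ whenever $i\neq j$. The only non-trivial step is then to upgrade each $g_k$ from continuous to real-analytic, which by classical elliptic regularity follows as soon as one shows that $g_k$ is weakly harmonic.

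To achieve this I would test the outer variation~\eqref{eq:outer} with
\[
\psi(y,u)=\eta(u)\,\Phi(y),
\]
where $\Phi\in C^\infty_c(B',\R^n)$ for any $B'\Subset B$ and $\eta\in C^\infty_c(\R^n)$ is a cutoff equal to $1$ on a neighbourhood of $g_k(\overline{B'})$ and vanishing on neighbourhoods of $g_j(\overline{B'})$ for $j\neq k$. Such an $\eta$ exists because the constant-cardinality assumption together with the continuity of the $g_j$'s forces the branches to be uniformly separated on $\overline{B'}$. Since $\nabla\eta$ vanishes at every point of the form $(y,f_i(y))$ with $y\in B'$, the $D_u\psi$ term in~\eqref{eq:outer} drops out, while the $D_x\psi$ term picks up only the $Q_k$ indices with $f_i=g_k$. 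The outer variation therefore collapses to
\[
0=\mathcal{O}(f,\psi)=Q_k\int_{B'}\langle Dg_k:D\Phi\rangle\,dy,
\]
so $g_k$ is weakly harmonic on $B'$, hence on $B$ by the arbitrariness of $B'$. Consequently $g_k$ is real-analytic, and because the $g_k$'s are pairwise distinct on $B$, the representation $f=\sum_k Q_k\a{g_k}$ exhibits $x$ as a regular point of $f$.

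The main obstacle is precisely this localisation of the outer variation to a single sheet; the argument goes through only because the constant-cardinality hypothesis upgrades the a priori pointwise distinctness of the branches to uniform separation on compact subsets, which is exactly what is needed to construct the separating cutoff $\eta$.
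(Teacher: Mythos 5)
Your proof follows essentially the same route as the paper: shrink to a simply connected neighbourhood, apply Lemma~\ref{lem:constantcardinalety} to split $f=\sum_k Q_k\a{g_k}$ into pointwise-distinct continuous selections, and then use the outer variation to deduce that each $g_k$ is weakly harmonic, hence smooth and analytic. The paper is terse on the last step (``we can use the outer variation to deduce that each of these maps must be harmonic''), and the test field $\psi(y,u)=\eta(u)\Phi(y)$ you introduce is a reasonable way to make it precise.

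There is, however, a gap in the construction of the cutoff $\eta$. You argue that constant cardinality and continuity force ``the branches to be uniformly separated on $\overline{B'}$,'' and conclude that $\eta$ exists. What you actually get from the hypotheses is \emph{pointwise} uniform separation, $\inf_{y\in\overline{B'},\,i\ne j}|g_i(y)-g_j(y)|>0$; this is not the same as saying the \emph{image sets} $g_i(\overline{B'})$ are pairwise disjoint, which is what your cutoff needs, since $\eta$ depends only on $u$ and would otherwise have to take the values $0$ and $1$ simultaneously at any point of $g_k(\overline{B'})\cap g_j(\overline{B'})$. A concrete obstruction: in $n=1$, take $g_1(y)=y$ and $g_2(y)=y+1$ on a ball of radius $1$; the branches are separated by $1$ at every point, yet the images overlap on an interval. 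The fix is routine: for each $y_0\in B$ use the uniform continuity of the $g_j$ to pick a small radius $\rho>0$ with $\operatorname{diam}\,g_j(\overline{B_\rho(y_0)})<\tfrac14\min_{i\ne j}|g_i(y_0)-g_j(y_0)|$, so that the images on $\overline{B_\rho(y_0)}$ are genuinely disjoint, and run your argument on $B_\rho(y_0)$. Since weak harmonicity is a local statement this suffices, and the rest of your proof then closes the argument.
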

\begin{proof}
	That (2) implies (1) follows directly from the implication (2) implies (1) in Lemma \ref{lem:constantcardinalety}. 
	
	To prove the converse we may assume that $U_x\subset \Omega$ is a simply connected domain where $y \mapsto \operatorname{card}(\spt(f(y))$ is constant. Hence we find a decomposition as described in (2) of Lemma \ref{lem:constantcardinalety}. But then we can use the outer variation to deduce that each of these maps must be harmonic.  
\end{proof}

\subsection{The frequency function and blow-ups}
Let $f$ be a $Q$-valued function, and assume that $f(0)=Q\a{0}$ and $\int_{B_r}|Df|^2>0$. Following \cite[section 3.4]{DS}, we define the quantities 
\begin{equation}\label{eq:frequencyfunction}
	D_{x,f}(r)= \int_{B_r(x)} |Df|^2, \quad H_{x,f}(r) = \int_{\partial B_r(x)} |f|^2\quad \text{and} \quad I_{x,f}(r) = \frac{rD_{x,f}(r)}{H_{x,f}(r)}\,.
\end{equation}
Since the proof concerning the monotonicity of the frequency function only relies on the outer and inner variation formulas, \eqref{eq:outer}-\eqref{eq:inner}, we have: 

\begin{theorem}\cite[Theorem 3.15]{DS}\label{thm:frequencymon}\
Let $f$ be stationary and $x\in \Omega$. Either there exists $\rho$ such that $f|_{B_\rho(x)}=Q\a{0}$ or $I_{x,f}(r)$ is an absolutely continuous non-decreasing function on $]0, \dist(x,\partial\Omega)[$.  
\end{theorem}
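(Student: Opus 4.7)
My plan is to follow the classical Almgren/De Lellis--Spadaro argument, which relies only on the outer and inner variation formulas \eqref{eq:outer}--\eqref{eq:inner} and therefore transfers verbatim to the stationary setting. The key identities to derive are a first-variation formula for $D_{x,f}$, a direct computation for $H_{x,f}$, and a Pohozaev-type identity coming from the inner variation; the monotonicity then drops out of a logarithmic differentiation combined with Cauchy--Schwarz.

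Concretely, I would first test the outer variation with $\psi(y,u) = \chi(y)\,u$ and approximate $\chi \to \chi_{B_r(x)}$ via a cutoff whose radial derivative concentrates on $\partial B_r(x)$. This yields, for a.e.\ $r$,
\[
D_{x,f}(r) \;=\; \int_{\partial B_r(x)} \sum_{i=1}^{Q} f_i\cdot\partial_\nu f_i\,,
\]
where the sum is taken along any measurable selection of the sheets of $f$. Differentiating $H_{x,f}(r) = r^{m-1}\int_{\partial B_1} |f(x+r\omega)|^2\,d\mathcal{H}^{m-1}(\omega)$ and plugging this in gives
\[
H_{x,f}'(r)\;=\;\frac{m-1}{r}H_{x,f}(r) + 2\,D_{x,f}(r).
\]
For the Pohozaev identity I would test the inner variation with $\phi(y) = \chi(|y-x|)(y-x)$ and take $\chi$ to approximate $\chi_{[0,r]}$. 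Using $\Div\phi = \chi'(|y-x|)|y-x| + m\,\chi(|y-x|)$ and the pointwise identity $\sum_i \langle Df_i : Df_i\cdot D\phi\rangle = \chi|Df|^2 + \chi'|y-x|^{-1}\sum_i |(y-x)\cdot \nabla f_i|^2$, a limit as the cutoff sharpens gives
\[
r\,D_{x,f}'(r) - (m-2)\,D_{x,f}(r) \;=\; 2r\int_{\partial B_r(x)}\sum_i|\partial_\nu f_i|^2\,,
\]
recalling that $D_{x,f}'(r) = \int_{\partial B_r(x)}|Df|^2$ a.e.

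Assuming $H_{x,f}(r)>0$ on an interval, logarithmic differentiation of $I_{x,f} = rD_{x,f}/H_{x,f}$ combined with the two displays above collapses to
\[
\frac{I_{x,f}'(r)}{I_{x,f}(r)} \;=\; 2\left[\frac{\int_{\partial B_r(x)}|\partial_\nu f|^2}{D_{x,f}(r)} \;-\; \frac{D_{x,f}(r)}{H_{x,f}(r)}\right],
\]
which is non-negative by the Cauchy--Schwarz inequality $D_{x,f}(r)^2 \le H_{x,f}(r)\,\int_{\partial B_r(x)}|\partial_\nu f|^2$ applied to the boundary expression $D_{x,f}(r) = \int_{\partial B_r(x)} \sum_i f_i\cdot \partial_\nu f_i$. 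Absolute continuity follows because both $D_{x,f}$ and $H_{x,f}$ are, and $H_{x,f}$ stays bounded below. For the dichotomy I would exploit that $(H_{x,f}(r)\,r^{-(m-1)})' = 2\,r^{-(m-1)}\,D_{x,f}(r)\ge 0$, so that if $H_{x,f}(r_0)=0$ for some $r_0>0$ then $H_{x,f}\equiv 0$ on $(0,r_0]$; this forces $D_{x,f}\equiv 0$ there and hence $f\equiv Q\llbracket 0\rrbracket$ on $B_{r_0}(x)$.

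The main technical obstacle is the approximation needed to legitimately push the outer and inner variation tests to discontinuous radial cutoffs and to extract pointwise boundary values on a.e.\ sphere. This is the usual Fubini-plus-trace argument, already handled in \cite[Section~3.4]{DS}: one works with a.e.\ radius for which $f|_{\partial B_r(x)}$ is a bona fide Sobolev trace and the various integrations by parts converge. Since all the computations above are purely at the level of the variation formulas, the stationarity hypothesis suffices and no Dir-minimality is used; this is exactly the observation that allows us to quote \cite[Theorem~3.15]{DS} in our setting.
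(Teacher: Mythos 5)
Your proposal is correct and matches the paper's approach: the paper simply cites \cite[Theorem 3.15]{DS}, with the one-sentence justification that the proof there uses only the outer and inner variation identities (which stationary maps satisfy), and you have spelled out exactly that proof — the standard Almgren computation with the first-variation identity for $D$, the Rellich–Pohozaev identity from the inner variation, the logarithmic derivative of $I$, Cauchy–Schwarz, and the $(H(r)r^{-(m-1)})'\ge 0$ dichotomy — correctly noting that no Dir-minimality enters anywhere.
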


Next we define the blow-ups of $f$ at $y$ by $\displaystyle{
f_{y,\rho}(x):=\frac{\rho^{\frac{m-2}2}\,f(\rho\,x+y)}{\sqrt{D_{y,f}(\rho)}}}$. Then we have the following

\begin{theorem}[{\cite[Theorem 3.19]{DS}}]\label{thm:bu}
Let $f \in W^{1,2}(B_1,\Iq)$ be a stationary map, with $B_1\subset \R^2$. Assume $f(0)= Q\a{0}$ and $D_{f}(\rho)>0$ for every $\rho\leq 1$. Then, for any sequence $\{f_{\rho_k}\}$ with $\rho_k \downarrow 0$, a subsequence, not relabeled, converges locally uniformly to a function $g : \R^2 \to \Iq$ with the following properties:
\begin{enumerate}
    \item $D_{0,g}(1)=1$ and $g$ is stationary;
    \item $\displaystyle{g(x)= |x|^{\alpha}\,g\left(\frac{x}{|x|}\right)}$, where $\alpha=I_{0,f}(0)>0$ is the frequency of $f$ at $0$. 
\end{enumerate}

\end{theorem}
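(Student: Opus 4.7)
The plan is a standard blow-up argument combining Corollary \ref{cor:compactstationary} for compactness of the rescalings with the monotonicity of the frequency (Theorem \ref{thm:frequencymon}) to identify the limit. The strategy splits into three parts: a uniform energy bound for $f_{\rho_k}$, passing to the limit, and deducing homogeneity from constant frequency.

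First I would establish the uniform bound on $\int_{B_R}|Df_{\rho_k}|^2$ for any fixed $R>0$. A change of variables gives $D_{f_{\rho_k}}(R)=D_f(R\rho_k)/D_f(\rho_k)$, so $D_{f_{\rho_k}}(1)=1$ by construction. The standard ODE identity
\[
(\log H_f)'(r)=\frac{1+2I_{0,f}(r)}{r},
\]
derived from the outer variation formula \eqref{eq:outer} (as in \cite[Section 3.4]{DS}), combined with the monotonicity of $I_{0,f}$ (Theorem \ref{thm:frequencymon}) which yields $I_{0,f}(r)\le \Lambda:=I_{0,f}(1)$ on $(0,1]$, gives by integration the doubling estimate $H_f(R\rho_k)/H_f(\rho_k)\le R^{1+2\Lambda}$ for $R\ge 1$. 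Using $D=HI/r$ one then deduces $D_{f_{\rho_k}}(R)\le C(\Lambda)R^{2\Lambda}$, uniformly in $k$.

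Next I would apply Corollary \ref{cor:compactstationary}. Since $f_{\rho_k}$ is stationary by scale invariance of \eqref{eq:outer}--\eqref{eq:inner}, satisfies $f_{\rho_k}(0)=Q\a{0}$, and has uniformly bounded Dirichlet energy on every $B_R$, a diagonal extraction yields a subsequence converging locally uniformly and strongly in $W^{1,2}_{\mathrm{loc}}$ to a stationary map $g:\R^2\to\Iq$. Strong convergence on $B_1$ then gives $D_g(1)=1$, proving (1). For the frequency of the limit, strong $W^{1,2}$ convergence on $B_r$ and uniform convergence on $\partial B_r$ (valid for a.e.\ $r$) yield $D_g(r)=\lim D_{f_{\rho_k}}(r)$ and $H_g(r)=\lim H_{f_{\rho_k}}(r)$, hence
\[
I_{0,g}(r)=\lim_{k\to\infty}I_{0,f}(r\rho_k)=\alpha\qquad\text{for every }r>0,
\]
since $I_{0,f}(s)\to\alpha=I_{0,f}(0)$ as $s\to 0$ by Theorem \ref{thm:frequencymon}.

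Finally, to obtain (2) I would use that $I_{0,g}\equiv\alpha$ on $(0,\infty)$ forces $g$ to be $\alpha$-homogeneous: inspecting the proof of Theorem \ref{thm:frequencymon} in \cite{DS}, the formula for $(\log I_{0,g})'$ is a sum of non-negative Cauchy--Schwarz defects, whose vanishing forces $\partial_r g_i(x)=(\alpha/|x|)g_i(x)$ on each sheet, i.e.\ $g(x)=|x|^\alpha g(x/|x|)$. Positivity $\alpha>0$ is then automatic: otherwise $\alpha=0$ together with homogeneity and continuity at the origin ($g(0)=Q\a{0}$) would force $g\equiv Q\a{0}$, contradicting $D_g(1)=1$. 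The main delicate step is this last one, since it requires extracting homogeneity from equality in the frequency monotonicity and interpreting the resulting pointwise identity in the multivalued setting.
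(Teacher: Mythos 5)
Your proposal follows essentially the same blueprint as the paper's, which simply refers to the proof of \cite[Theorem 3.19]{DS} with the two indicated substitutions: Corollary \ref{cor:hoelder} for the H\"older estimate, and Corollary \ref{cor:compactstationary} for compactness. Your steps (uniform bound via the doubling estimate, diagonal extraction and strong $W^{1,2}$ convergence via Corollary \ref{cor:compactstationary}, constancy of the limiting frequency, homogeneity from equality in the frequency inequality) are precisely the arguments of \cite{DS} adapted to the stationary setting, so the overall approach is the correct one.

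There is, however, a circularity in how you obtain $\alpha>0$. In your uniform energy bound, using $D=HI/r$ gives
\[
D_{f_{\rho_k}}(R)=\frac{D_f(R\rho_k)}{D_f(\rho_k)}=\frac{I_{0,f}(R\rho_k)}{I_{0,f}(\rho_k)}\cdot\frac{H_f(R\rho_k)}{H_f(\rho_k)}\cdot\frac{1}{R}\le \frac{\Lambda}{I_{0,f}(\rho_k)}\,R^{2\Lambda},
\]
and the factor $I_{0,f}(\rho_k)^{-1}$ is only bounded, uniformly in $k$, if $\alpha=\lim_{r\to0}I_{0,f}(r)>0$. You derive $\alpha>0$ at the very end from $D_g(1)=1$ and $0$-homogeneity, but that conclusion presupposes the convergence you are trying to establish. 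The correct order (and what \cite{DS} does, cf.\ their Corollary 3.18, which the paper is implicitly invoking through Corollary \ref{cor:hoelder}) is to prove $\alpha\ge \alpha_0>0$ \emph{before} the blow-up, using only the H\"older estimate and the ODE $(\log(H_f(r)/r))'=2I_{0,f}(r)/r$: if $\alpha<\alpha_0$ then $r\mapsto H_f(r)/r^{1+2\alpha_0}$ is eventually decreasing with a non-integrable derivative near $0$, forcing $H_f(r)/r^{1+2\alpha_0}\to\infty$, while the H\"older bound $\cG(f(x),Q\a{0})\le C\,|x|^{\alpha_0}$ keeps this quantity bounded. Once $\alpha>0$ is known, your doubling estimate and the rest of the argument go through as written.
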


\begin{proof}
The proof follows from the same arguments as in the proof of \cite[Theorem 3.19]{DS}, replacing Theorem 3.9 about H\"older continuity of minimizers with Corollary \ref{cor:hoelder} on the H\"older continuity of stationary maps, and Proposition 3.20 on the compactness of minimizers, with our Corollary \ref{cor:compactstationary} for stationary maps.
\end{proof}

Finally we will need the following elementary lemma on the classification of homogeneous stationary map on $\R$.

\begin{lemma}\label{lem:classificationin1d}
	Let $h \in W^{1,2}(\R, \Iqs)$ be an $\alpha$-homogeneous map that is stationary with respect to the outer variation then 
	\begin{enumerate}
		\item $\alpha=1$ 
		\item there are two points $T_\pm \in \Iqs$ such that 
		\[ h(t) = \begin{cases}
			t T_+ &\text{ if } t>0\\
			t T_- &\text{ if } t\le 0
		\end{cases}\,.\]
	\end{enumerate}
	Furthermore $h$ is stationary with respect to inner variations if and only if $|T_+|=|T_-|$.
\end{lemma}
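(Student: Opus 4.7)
The plan is to exploit the one-dimensional setting aggressively: outer stationarity forces sheets to be harmonic (hence affine in one variable), while inner stationarity reduces to constancy of $|Dh|^2$.

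First I would unpack the homogeneity: by Sobolev embedding $h$ is continuous, and writing $T_+:=h(1)=\sum_i\a{p_i^+}$, $T_-^0:=h(-1)=\sum_i\a{p_i^-}$, homogeneity yields $h(t)=\sum_i\a{t^\alpha p_i^+}$ for $t>0$ and $h(t)=\sum_i\a{(-t)^\alpha p_i^-}$ for $t<0$. Group coinciding sheets so that the distinct support points of $T_+$ are separated; on the open set $(0,\infty)$ the map $t\mapsto t^\alpha$ is injective, so $h$ splits on $(0,\infty)$ into finitely many disjoint branches of the form $t\mapsto t^\alpha p_j^+$ with multiplicity. Because the outer variation is linear and additive over such disjoint branches (test $\psi$ supported near a single branch), each branch is harmonic, i.e., linear in $t$. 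The function $t\mapsto t^\alpha p$ is linear on $(0,\infty)$ iff $\alpha=1$ or $p=0$. Hence either $\alpha=1$, or every $p_j^+$ vanishes; in the latter case the same argument on $(-\infty,0)$ gives $h\equiv Q\a{0}$, which is the trivial case (and (2) is satisfied with $T_\pm=Q\a{0}$). Thus for nontrivial $h$ we have $\alpha=1$, proving (1).

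With $\alpha=1$, on $t>0$ we have $h(t)=tT_+$, and on $t<0$ we have $h(t)=(-t)\,T_-^0=t\cdot T_-$ where $T_-:=-T_-^0=\sum_i\a{-p_i^-}\in\Iqsn$ (average-freeness is preserved under negation). Continuity at $0$ is automatic since both expressions tend to $Q\a{0}$, and this gives (2).

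Finally for the inner-variation characterization, in one dimension $\mathcal I(h,\phi)$ collapses dramatically: for $\phi\in C^\infty_c(\R,\R)$ one has $Dh_i=h_i'$, $D\phi=\Div\phi=\phi'$, so
\[
\mathcal I(h,\phi)=2\int\sum_i|h_i'|^2\,\phi'-\int|Dh|^2\,\phi'=\int|Dh|^2\,\phi'.
\]
Therefore $\mathcal I(h,\phi)=0$ for all test $\phi$ iff $|Dh|^2$ is constant on $\R$. Using the explicit form of $h$ from (2), $|Dh|^2=|T_+|^2$ on $(0,\infty)$ and $|Dh|^2=|T_-|^2$ on $(-\infty,0)$ (recall $|T|^2=\G(T,Q\a{0})^2=\sum|p_i|^2$, which is invariant under $T\mapsto -T$), so constancy is equivalent to $|T_+|=|T_-|$.

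The only step that requires a bit of care is the splitting on $(0,\infty)$: one must justify that the outer-variation equation restricted to a neighborhood where the sheets are separated implies that each sheet is classically harmonic. This is standard for separated $Q$-valued maps (test with $\psi$ of the form $\chi(u)\,\xi(x)\,v$ with $\chi$ localized near a single branch value), and it is the only non-bookkeeping input; the rest is straightforward algebra.
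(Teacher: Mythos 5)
The paper does not include a proof of Lemma~\ref{lem:classificationin1d}, so your proposal cannot be compared against a written argument; I will assess it on its own. Your overall strategy is the natural one and is essentially correct: Sobolev embedding in one dimension gives continuity; homogeneity reduces $h$ to the two profiles $T_\pm$; on $(0,\infty)$ the sheets corresponding to distinct support points of $T_+$ are separated, so outer stationarity localizes to each branch and forces it to be harmonic (hence affine); and the $1$-d computation $\mathcal I(h,\phi)=\int|Dh|^2\phi'$ reduces inner stationarity to the constancy of $|Dh|^2$, which for the resulting profile is exactly $|T_+|=|T_-|$. The identification $T_-=-T_-^0$ and the invariance of $|\cdot|$ under negation are handled correctly.

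There is, however, a gap in the step ``$t\mapsto t^\alpha p$ is linear on $(0,\infty)$ iff $\alpha=1$ or $p=0$''. Harmonicity in one dimension gives that each branch is \emph{affine}, not linear, and $t\mapsto t^\alpha p$ with $p\neq 0$ is affine for $\alpha\in\{0,1\}$, not only for $\alpha=1$. Thus the argument as written only yields $\alpha\in\{0,1\}$ (or all $p_j^\pm=0$). The case $\alpha=0$ produces a globally constant $h$ (constant on each of $(0,\infty)$ and $(-\infty,0)$, and the two constants agree by continuity at $0$), for which conclusion (1) fails unless $h\equiv Q\a{0}$, and conclusion (2) has the wrong form. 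You need to either add a nontriviality hypothesis (e.g.\ $Dh\not\equiv 0$, which is automatic in the paper's application since the second blow-up has unit Dirichlet energy), or explicitly dispose of the $\alpha=0$ constant case. With that one-line fix the proof is complete; note also that once $\alpha=1$ the vanishing of the constant term $b_j$ in the affine branch $a_jt+b_j$ follows from $\alpha$-homogeneity (the branch tends to $0$ as $t\to 0^+$), which you use implicitly and could state.
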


In general the problem of classifying homogeneous stationary maps seems rather difficult even in dimension $2$, as illustrated by Remark \ref{rem:examplehomogeneous}.

\subsection{Estimate on the size of the singular set: proof of Theorem \ref{thm:dimension}}
Having established all the needed tools, we can combine them to show the estimate on the size of the singular set of a stationary map $f$, i.e. $\dim(\sing(f))\le 1$. In fact with the established tools the argument is nowadays almost ``classical'' and is very close to the original argument presented in \cite[section 3.6]{DS}. 
Hence we only outline the argument and highlight the needed adaptations.

Having Corollary \ref{cor:sing=card non continuous} in mind, it is natural to decompose the singular set into the subsets 
\[ S_k=\sing(f) \cap \{\operatorname{card}(f)=k\}\,,\qquad  \text{ for } k=1, \dotsc, Q\,.\]
Note that, since $x \mapsto \operatorname{card}(f(x))$ is lower semicontinuous, all the sets $S_k$ are relatively open in $\sing(f)$. Furthermore $S_Q=\emptyset$ and $S_1$ corresponds to the set $\Sigma_Q$ studied in \cite[section 3.6]{DS}.

Assume now by contradiction that $\mathcal{H}^t(\sing(f))>0$, for some $t>1$. This implies that there is at least one $k<Q$, with $\mathcal{H}^t(S_k)>0$. Hence there is a point $x_0 \in \Omega$ with positive density, i.e. 
$$\limsup_{r \to 0} \frac{ \mathcal{H}^t(S_k\cap B_r(x_0))}{r^t} >0\,.
$$
After translation we may assume that $x_0=0$ and note that, due to Corollary \ref{cor:sing=card non continuous}, $x_0$ cannot be an interior point of $S_k$. We apply Proposition \ref{prop:decomposition} to $f$ around $0$ and obtain the map $\tilde{f}$, such that $\tilde{f} \neq Q \a{0}$, since $0$ is not an interior point of $S_k$, and $\eta \circ \tilde{f} =0$ by (2). Furthermore due to (6) and (7) we have 
\begin{equation}\label{eq:positivedensity} \limsup_{r\to 0} \frac{ \mathcal{H}^t(\sing(\tilde{f})\cap \{\tilde{f}=Q\a{0}\}\} \cap B_r)}{r^t} >0 \,.  \end{equation}
Now the conclusion follows by the same arguments presented in \cite[Subsection 3.6.2]{DS}: let $r_k \downarrow 0$ be a subsequence realising the $\limsup$ in \eqref{eq:positivedensity} and consider the corresponding blow-up sequence $\tilde{f}_{r_k}$. By Theorem \ref{thm:bu}, we find a nontrivial $\alpha$-homogeneous stationary tangent map $g \colon \R^2 \to \Iqsn$.  Moreover  by \eqref{eq:positivedensity}, $g$ satisfies 
$$\mathcal{H}^t_\infty( B_1 \cap \{ g=Q\a{0}\})>0.$$ 
Therefore there exists $y \in \partial B_1 \cap \partial \{g=Q\a{0}\}$ once again with positive $\mathcal{H}^t_\infty$-density. Since this must be a singular point we can perform a second blow-up as described in \cite[Lemma 3.24]{DS}. However the corresponding tangent function $k\in W^{1,2}(\R^2, \Iq(\R^n))$ is nontrivial, homogeneous, depending only on one variable and such that $\mathcal{H}^t_\infty( B_1 \cap \{ k=Q\a{0}\})>0$, contradicting Lemma \ref{lem:classificationin1d}. This proves the estimate on the singular set.

\begin{remark}\label{rm:mink}
It is very likely that applying the arguments presented in \cite{DMS} to a $2$-dimensional stationary map $f$ would actually lead to Minkowski-type bounds 
\[ 
\mathcal{L}^2\left(B_{r_0} \cap B_{r}(\{f=Q\a{0}\})\right) \le C(\tilde{f},\rho_0) r\,. \]
Additionally we expect that $\{f=Q\a{0}\}$ is countable $1$-rectifiable in $B_{r_0}$. In fact, applying the reasoning to $\tilde{f}$ should lead to the rectifiability of $S_k\cap B_{r_0}$ and therefore to the rectifiability of the singular set of any $2$-dimensional stationary map $f$.
\end{remark}

\bibliographystyle{plain}
\bibliography{continuous2d.bib}

\end{document}